\newcommand{\fz}{\frac}
\newcommand{\prz}[2]{ \frac{\partial{#1}}{\partial{#2}} }
\newcommand{\pz}{\partial}
\newcommand{\lA}{\langle}
\newcommand{\rA}{\rangle}
\newcommand{\ol}{\overline}
\renewcommand{\Omega}{\varOmega}
\renewcommand{\Gamma}{\varGamma}
\renewcommand{\Psi}{\varPsi}
\renewcommand{\Pi}{\varPi}
\newtheorem{Hyp}{Hypothesis}[section]
\newtheorem{Thm}[Hyp]{Theorem}
\newtheorem{Lem}[Hyp]{Lemma}
\newtheorem{Cor}[Hyp]{Corollary}
\newtheorem{Rmk}[Hyp]{Remark}
\newtheorem{Ex}{Example}[section]
\newtheorem{Alg}{Algorithm}[section]
\newcommand{\R}{\mathbb{R}}
\newcommand{\N}{\mathbb{N}}
\newcommand{\sym}{{\rm sym}}
\newcommand{\defeq}{\vcentcolon=}
\newcommand{\argmin}{\mathop{\rm arg\,min}\limits}
\providecommand{\keywords}[1]{\textit{Keywords:} #1}
\title{
The gradient flow structure of an extended Maxwell viscoelastic model and a structure-preserving finite element scheme
}
\author{
Masato~Kimura$^{1}$,
Hirofumi~Notsu$^{1,2,}$\footnote{Corresponding author}\ ,
Yoshimi~Tanaka$^{3}$
and
Hiroki~Yamamoto$^{4}$
}
\date{
\small
$^1$Faculty of Mathematics and Physics, Kanazawa University \smallskip\\
$^2$Japan Science and Technology Agency, PRESTO \smallskip\\
$^3$Department of Environment and System Sciences, Yokohama National University \smallskip\\
$^4$Graduate School of Natural Science and Technology, Kanazawa University
\smallskip\\
%
{\tt \{mkimura, notsu\}@se.kanazawa-u.ac.jp},\ \ 
{\tt ystanaka@ynu.ac.jp},\ \ 
{\tt mos@stu.kanazawa-u.ac.jp}
}
\begin{document}
\maketitle
\begin{abstract}
An extended Maxwell viscoelastic model with a relaxation parameter is studied from mathematical and numerical points of view.
It is shown that the model has a gradient flow property with respect to a viscoelastic energy.
Based on the gradient flow structure,
a structure-preserving time-discrete model is proposed and
existence of a unique solution is proved.
Moreover, a structure-preserving P1/P0 finite element scheme is presented
and its stability in the sense of energy is shown by using its discrete gradient flow structure.
As typical viscoelastic phenomena,
two-dimensional numerical examples by the proposed scheme
for a creep deformation and a stress relaxation
are shown and the effects of the relaxation parameter are investigated.
\par
\vspace{0.5em}
\noindent
\keywords{Gradient flow structure, Maxwell viscoelastic model, Finite element method, Structure preserving scheme}
\end{abstract}
%
%
%
%
\section{Introduction}\label{sec:intro}
In this paper, we develop a gradient flow structure of an extended Maxwell viscoelastic model, which naturally induces a stable structure-preserving P1/P0 finite element scheme.
The model includes a relaxation parameter~$\alpha \ge 0$ and it is
a variant of the standard linear solid model or the Zener model, see, e.g.,~\cite{AbuEbe-2007}.
We note that the model with $\alpha = 0$ is the well-known pure Maxwell viscoelastic model.
In that sense the model is an extension of the Maxwell viscoelastic model.
Although the argument to be presented in this paper can be applied to the so-called Zener(-type) models, here we focus on the extended Maxwell viscoelastic model.
Throughout the paper we shall often call this model simply the Maxwell model.
\par
There are many books and papers dealing with the Maxwell and other viscoelastic models.
For example, the books by Ferry~\cite{Ferry-1970}, Golden and Graham~\cite{GolGra-1988}, Lockett~\cite{Lockett-1972} and Macosko~\cite{Mac-1994}, and the papers by Karamanou et al.~\cite{KarShaWarWhi-2005}, Rivi{\`e}re and Shaw~\cite{RivSha-2006}, Rivi{\`e}re et al.~\cite{RivShaWheWhi-2003}, and Shaw and Whiteman~\cite{ShaWhi-2004}, where in the papers mainly discontinuous Galerkin finite element schemes have been proposed and analyzed.
Nevertheless, as far as we know, there are no papers discussing the gradient flow structure of the Maxwell or Zener viscoelastic models, which is important not only for the characterization of the model but also for the development of stable and convergent numerical schemes.
\par
It is easy to draw conceptual diagrams of the Maxwell viscoelastic model
with or without a sub-spring in one dimension, see, e.g.,
Figs.~\ref{fig:SLS_model}~($\alpha > 0$) or~\ref{fig:maxwell_model}~($\alpha = 0$), respectively.
For the Maxwell model with a sub-spring ($\alpha > 0$),
the system consists of an elastic spring and a subsystem connected in series.
This subsystem consists of a viscous dashpot and a sub-spring connected in parallel.
The pure Maxwell model is the Maxwell model without a sub-spring ($\alpha = 0$),
which consists of an elastic spring and a viscous dashpot connected in series.
According to the diagrams we can derive a $d$-dimensional extended Maxwell model $(d=2, 3)$.
It comprises of two equations; one is the force balance law of the system,
which can be seen as a quasi-static equation of linear elasticity,
and the other one is a time-dependent equation for the so-called viscosity effect.
\par
In this paper we present the gradient flow structure of the Maxwell model, which provides the energy decay property on the continuous level~(Theorem~\ref{thm:gradient_flow_continuous}).
As mentioned above, the structure is useful not only for the analysis but also for the development of stable and convergent numerical schemes.
Indeed, if the structure is also preserved for a numerical scheme on the discrete level, the stability of the scheme in the sense of energy follows in general.
\par
For a time-discrete Maxwell model, we present two results;
the existence and uniqueness of solutions~(Theorem~\ref{thm:solvability_tau}) and the corresponding discrete gradient flow structure and energy decay estimate~(Theorem~\ref{thm:gradient_flow_tau}).
\par
For the discretization in space we use the finite element method with P1/P0 element, i.e., the piecewise linear finite element~(P1-element) and the piecewise constant finite element~(P0-element) are employed for the approximations of the displacement of the viscoelastic body and the matrix-valued function for the viscosity effect, respectively.
We prove that the P1/P0 finite element scheme has a unique solution~(Theorem~\ref{thm:solvability_fem}) and that the scheme preserves the discrete gradient flow structure~(Theorem~\ref{thm:gradient_flow_fem}), which leads to the stability of the scheme in the sense of energy.
\par
The scheme is realized by an efficient algorithm~(see Algorithm on p.\pageref{AL}), where for each time-step the matrix-valued function is determined explicitly on each triangular element, although the backward Euler method is employed for the time integration.
Table~\ref{tab:eq_num_thm_num} lists the main results of this paper.
\begin{table}[!htbp]
\centering
\caption{Our main results.}
\label{tab:eq_num_thm_num}
\begin{tabular}{rccccc}
\toprule
& \multicolumn{2}{c}{Formulations} && \multicolumn{2}{c}{Results} \\ \cmidrule{2-3} \cmidrule{5-6}
& Strong & Weak && Exist. \& Uniq. & Gradient flow \\ \midrule
The continuous model & \eqref{prob:zener_strong} & \eqref{prob:maxwell_weak} && (in preparation~\cite{KNTY_prep}) & Theorem~\ref{thm:gradient_flow_continuous} \\
The time-discrete model & \eqref{prob:maxwell_timediscrete_strong} & \eqref{prob:maxwell_timediscrete_weak} && Theorem~\ref{thm:solvability_tau} & Theorem~\ref{thm:gradient_flow_tau}\\
The finite element scheme & -- & \eqref{scheme} && Theorem~\ref{thm:solvability_fem} & Theorem~\ref{thm:gradient_flow_fem}\\
\bottomrule
\end{tabular}
\end{table}
\par
We remark that the energy decay property of the Maxwell viscoelastic model was already shown in~\cite{RogWin-2010} and it was used to prove a stability estimate.
But they do not mention on its gradient flow structure in contrast to our approach.
In this paper, we prove the gradient flow structure with respect to a natural elastic energy and also propose a structure-preserving numerical scheme.
\par
To avoid the locking phenomena, stress formulations are often used in engineering.
Here we consider a displacement formulation and propose a structure-preserving P1/P0 finite element scheme, which may show the phenomena if the mesh size is not small enough.
The locking problem, however, can be overcome by an extended scheme with a pair of higher-order finite elements and/or adaptive mesh refinement technique~\cite{ALBERTA-2005} (see~Remark~\ref{rmk:high_order_schemes}-(ii) and -(iii)).
We note that the gradient flow structures in the continuous and discrete levels to be presented in this paper are the advantages of the displacement formulation.
\par
The paper is organized as follows.
In Section~\ref{sec:derivation} the governing equation of the extended Maxwell model
is derived and its initial and boundary value problem is stated.
In Section~\ref{sec:energy_decay} the gradient flow structure of the Maxwell model is presented and the energy decay estimate is shown.
In Section~\ref{sec:time_discretization} a time-discretization of the Maxwell model is studied;
the existence and uniqueness of solutions to the time-discretization of the Maxwell model is proved and the time-discrete gradient flow structure and the time-discrete energy decay estimate are shown.
In Section~\ref{sec:fem} a P1/P0 finite element scheme preserving the time-discrete gradient flow structure is presented.
In Section~\ref{sec:numerics} two-dimensional numerical results of the Maxwell model by the P1/P0 finite element scheme are shown.
%
%
%
%
\section{The extended Maxwell model}\label{sec:derivation}
%
%
%
%
The function spaces and the notation to be used throughout the paper are as follows.
Let $d = 2$ or $3$ be the dimension in space, and $\R^{d\times d}_\sym$ the space of symmetric $\R^{d\times d}$-valued matrices.
We suppose that $\Omega \subset \R^d$ is a bounded Lipschitz domain in this paper.
For a space~$R \in \{ \R^d, \R^{d\times d}_\sym\}$, the $R$-valued function spaces defined in $\Omega$ are denoted by $L^2(\Omega; R)$, $H^1(\Omega; R)$ and $C(\ol\Omega; R)$ etc.
For example, $H^1(\Omega; R)$ denotes the $R$-valued Sobolev space on $\Omega$.
For any normed space $X$ we define function spaces $C([0,t_0]; X)$ and $C^1([0,t_0]; X)$ consisting of $X$-valued functions in $C([0,t_0])$ and $C^1([0,t_0])$, respectively.
The dual pairing between $X$ and the dual space $X^\prime$ is denoted by ${}_{X^\prime}\lA\cdot, \cdot\rA_X$.
For normed spaces~$X$ and $Y$ the set of bounded linear operators from $X$ to $Y$ is denoted by $\mathcal{L}(X, Y)$.
For square matrices $A, B \in \R^{d\times d}$ we use the notation $A:B \, \defeq \sum_{i,j = 1}^d A_{ij} B_{ij}$.
%
%
\subsection{Derivation of the model}
We introduce the extended Maxwell model with a relaxation term,
which is represented by a spring and a subsystem in series,
where the subsystem consists of a dashpot and a subspring connected in parallel,
cf. Fig.~\ref{fig:SLS_model}.
Let $e_0: \Omega\times [0, T) \to \R^{d\times d}_\sym$ and $\sigma_0: \Omega\times [0, T) \to \R^{d\times d}_\sym$ be the total strain and the total stress of a material governed by the Maxwell model.
Let $(e_i, \sigma_i): \Omega\times [0, T) \to \R^{d\times d}_\sym \times \R^{d\times d}_\sym$ for $i=1, \ldots, 4$ be the pairs of strain and stress for the left spring ($i=1$), the subsystem ($i=2$), the dashpot~($i=3$) and the subspring~($i=4$), respectively.
Furthermore, let $u = (u_1, \ldots, u_d)^T: \Omega \times [0, T) \to \R^d$ be the displacement of the material, $e[u]: \Omega\times [0, T) \to \R^{d\times d}_\sym$ the symmetric part of $\nabla u$ defined by
\begin{align}
e[u] \defeq \fz{1}{2} [ \nabla u +(\nabla u)^T ],
\label{def:e_u}
\end{align}
and $f: \Omega \times [0, T) \to \R^d$ be a given external force, where the superscript~$T$ denotes the transposition.
\begin{figure}[!htbp]
\centering
\includegraphics[bb=50 70 410 297,scale=0.4]{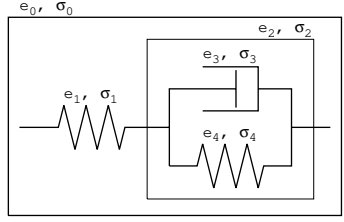}
\caption{A conceptual diagram of the Maxwell viscoelastic model with a relaxation term in one dimension.}
\label{fig:SLS_model}
\end{figure}
\par
According to Fig.~\ref{fig:SLS_model} we give the relations of $(e_i, \sigma_i)$ for $i=0, \ldots, 4$.
For the total strain and stress, $(e_0, \sigma_0)$, it is natural that the equations
\begin{align}
e_0 = e[u], \qquad - \nabla \cdot \sigma_0 = f
\label{eq:derivation_0}
\end{align}
hold; the former equation means that the total strain is expressed by $e[u]$, and the latter equation is the balance of forces.
We suppose that
\begin{subequations}\label{eqns:derivation}
\begin{align}
e_0 = e_1 + e_2, & \qquad \sigma_0 = \sigma_1 = \sigma_2, \label{eq:derivation_1} \\
\sigma_1 &= C e_1, \label{eq:derivation_2}
\end{align}
\end{subequations}
where $C=(c_{ijkl})_{i, j, k, l = 1, \ldots, d}$ is a fourth-order elasticity tensor for the left spring.
The series connection of spring and subsystem leads to~\eqref{eq:derivation_1},
and the Hooke's law implies~\eqref{eq:derivation_2}.
For the right subsystem we also suppose that
\begin{subequations}\label{eqns:derivation_zener}
\begin{align}
e_2 = e_3 = e_4, & \qquad \sigma_2 = \sigma_3 + \sigma_4, \label{eq:derivation_3_zener_1} \\
\eta \prz{e_3}{t} & = \sigma_3, \label{eq:derivation_3_zener_2} \\
\sigma_4 &= \alpha e_4, \label{eq:derivation_3_zener_3}
\end{align}
\end{subequations}
where $\eta>0$ is a viscosity constant of the material and $\alpha \ge 0$ is a scalar spring
constant of the subspring, which has a relaxation effect for the viscous dashpot.
In this paper, we call $\alpha$ a relaxation parameter.
The parallel connection of the dashpot and the subspring in the subsystem leads to~\eqref{eq:derivation_3_zener_1}.
The effect of the dashpot is taken into account through~\eqref{eq:derivation_3_zener_2} with~$\eta$.
Let us introduce the notation:
\begin{align}
\sigma[u,\phi] \defeq C(e[u] - \phi)\qquad \bigl( (u,\phi): \Omega \to \R^d \times \R^{d\times d}_\sym \bigr).
\label{def:sigma}
\end{align}
Then, we have the following relations,
\begin{align*}
e_1 & = e_0 - e_2 = e[u] - e_2,\qquad
\sigma_0 = \sigma_1 = Ce_1 = C(e[u] - e_2)=\sigma[u, e_2], \\
\eta \prz{e_2}{t} & = \eta \prz{e_3}{t} = \sigma_3 = \sigma_2 - \sigma_4 = \sigma_0 - \alpha e_2 = \sigma[u,e_2] - \alpha e_2,
\end{align*}
which yield
\begin{subequations}\label{eqns:derivation_summary_zener}
\begin{align}
-\nabla \cdot \sigma [u, e_2] & = f, \\
\eta \prz{e_2}{t} + \alpha e_2 & = \sigma [u, e_2].
\end{align}
\end{subequations}
This completes the derivation of the governing equations of the Maxwell model given by~\eqref{eqns:derivation_summary_zener}.
\begin{Rmk}
The system~\eqref{eqns:derivation_summary_zener} is an extension of the simplest {\rm Maxwell} model as shown in Fig.~\ref{fig:maxwell_model}, since \eqref{eqns:derivation_summary_zener} with $\alpha = 0$ leads to
\begin{subequations}\label{eqns:derivation_summary_maxwell}
\begin{align}
-\nabla \cdot \sigma [u, e_2] & = f, \\
\eta \prz{e_2}{t} & = \sigma [u, e_2],
\end{align}
\end{subequations}
which can be obtained similarly to the derivation of~\eqref{eqns:derivation_summary_zener} according to Fig.~\ref{fig:maxwell_model}.
\end{Rmk}
\begin{figure}[!htbp]
\centering
\includegraphics[bb=50 100 410 267,scale=0.4]{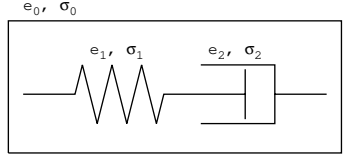}
\caption{A conceptual diagram of the Maxwell viscoelastic model without a relaxation term in one dimension.}
\label{fig:maxwell_model}
\end{figure}
%
%
%
%
%
%
%
%
\subsection{Initial and boundary value problem}
We consider an initial and boundary value problem for the
extended Maxwell model.
The strain tensor variable $e_2$ in~\eqref{eqns:derivation_summary_zener}
is denoted by $\phi$ hereafter.
Let $\Gamma \defeq \pz\Omega$ be the boundary of~$\Omega$, and let $\Gamma_0$ be an open subset of~$\Gamma$ and $\Gamma_1 \defeq \Gamma \setminus \ol\Gamma_0$.
We suppose that the $(d-1)$-dimensional measure of~$\Gamma_0$ is positive and equal to that of~$\ol\Gamma_0$, where the case $\Gamma_0 = \Gamma$ ($\Gamma_1 = \emptyset$) is also available in the following.
\par
Now we summarize the mathematical formulation of the Maxwell model, which is to find $(u,\phi):\Omega \times (0,T) \to \R^d \times \R^{d \times d}_\sym$ such that
\begin{subequations}\label{prob:zener_strong}
\begin{align}
-\nabla \cdot \sigma[u,\phi] & = f & \mbox{in} & \ \ \Omega \times (0,T),
\label{prob:zener_strong_eq1} \\
 \eta \prz{\phi}{t} + \alpha\phi - \sigma[u,\phi] & = 0 & \mbox{in} & \ \ \Omega \times (0,T),
\label{prob:zener_strong_eq2} \\
 u & = g & \mbox{on} & \ \ \Gamma_0 \times (0,T),
\label{prob:zener_strong_eq3} \\
 \sigma[u,\phi]n & = q & \mbox{on} & \ \ \Gamma_1 \times (0,T),
\label{prob:zener_strong_eq4}\\
 \phi & = \phi^{0} & \mbox{in} & \ \ \Omega,\ \mbox{at $t=0$},
\label{prob:zener_strong_eq5}
\end{align}
\end{subequations}
where
$u:\Omega\times (0,T) \to \R^d$ is the displacement of the viscoelastic material,
$\phi:\Omega\times (0,T) \to \R^{d\times d}_{\rm sym}$ is the tensor describing the viscosity effect,
$\eta > 0$ and $\alpha \ge 0$ are given constants,
and $f: \Omega\times (0,T) \to \R^d$, $g: \Gamma_0\times (0,T) \to \R^d$, $q: \Gamma_1\times (0,T) \to \R^d$ and $\phi^0: \Omega\to \R^{d\times d}_\sym$ are given functions.
For the definitions of the stress tensor~$\sigma[u,\phi]$ and the strain tensor~$e[u]$, see~\eqref{def:sigma} and~\eqref{def:e_u}, respectively, where $C = (c_{ijkl})_{ijkl}$ used in the definition of $\sigma[u,\phi]$ is a given fourth-order elasticity tensor.
\par
In this paper, for simplicity, the next hypothesis is assumed to be held.
\begin{Hyp}\label{hyp:tensor_C}
(i)~The tensor $C$ is symmetric, isotropic and homogeneous, i.e.,
\begin{align}
c_{ijkl}(x) = c_{ijkl} = \lambda\delta_{ij}\delta_{kl} + \mu(\delta_{ik}\delta_{jl} + \delta_{il}\delta_{jk}), \quad \forall x \in \Omega,
\label{cond:elasticity_tensor}
\end{align}
for $\mu, \lambda \in \R$, where $\delta_{ij}$ is {\rm Kronecker}'s delta.
\smallskip \\
(ii)~The tensor $C$ is positive, i.e., there exists a positive constant~$c_\ast$ such that
\begin{align}
\sum_{i,j,k,l=1}^d c_{ijkl} \xi_{ij} \xi_{kl} \ge c_\ast |\xi|^2, \quad \forall \xi \in \R^{d\times d}_\sym,
\label{ieq:positivity}
\end{align}
where $|\xi| \defeq (\sum_{i,j=1}^d \xi_{ij}^2)^{1/2}$.
\end{Hyp}
\begin{Rmk}
(i)~$\lambda$ and $\mu$ are the so-called {\rm Lam\'e}'s constants. \\
(ii)~The positivity~\eqref{ieq:positivity} is satisfied for $c_\ast = 2\mu + \lambda d (> 0)$ if $\mu$ and $\lambda \in \R$ satisfy $\mu > 0$ and $\lambda > -(2/d) \mu$.
\end{Rmk}
%
%
%
\subsection{Relationship between a viscoelastic model and the Maxwell model~\eqref{prob:zener_strong}}
Another viscoelastic model is well known and studied in~\cite{KarShaWarWhi-2005,RivSha-2006,RivShaWheWhi-2003,ShaWhi-2004}.
In the model the governing equations on the displacement $u(t) = u(\cdot, t):\Omega \to \R^d$ for $t \in (0,T)$ are represented as
\begin{subequations}\label{model:SW}
\begin{align}
-\nabla \cdot \sigma^{\rm Total}[u(t)] & = f(t) & \mbox{in} & \ \ \Omega, \\
\sigma^{\rm V}[u(t)] & = \int_0^t \prz{D}{s}(t-s) e[u(s)]~ds & \mbox{in} & \ \ \Omega,
\label{model:SW_eq2}
\end{align}
\end{subequations}
where $\sigma^{\rm Total}[u]$ is defined by
\[
\sigma^{\rm Total}[u] \defeq \sigma^{\rm E}[u] - \sigma^{\rm V}[u],
\qquad
\sigma^{\rm E}[u] \defeq D(0) e[u],
\]
and $D(t) = D(\cdot, t): \Omega \to \R^{d\times d\times d \times d}$ is a given fourth-order tensor.
The boundary conditions {in~\eqref{model:SW}} are omitted.
\par
For the sake of simplicity, we suppose that $D$ is homogeneous, and that
\begin{align*}
\prz{D}{t}(t) = e^{-\alpha t}C, \qquad 
D(0) = C,
\end{align*}
i.e., $D(t) = [1+(1/\alpha)(1-e^{-\alpha t})] C$.
Then, \eqref{model:SW_eq2} implies that
\begin{align*}
\prz{\sigma^{\rm V}}{t}[u(t)] + \alpha \sigma^{\rm V}[u(t)] & = C e[u(t)]  & \mbox{in} & \ \ \Omega.
\end{align*}
Multiplying both sides of the equation above on the left by~$C^{-1}$, letting $\phi^{\rm V}(t) \defeq C^{-1}\sigma^{\rm V}[u(t)]$ and noting that $\sigma^{\rm E}[u] = C e[u]$, we obtain
\begin{subequations}\label{model:SW_phi}
\begin{align}
-\nabla \cdot \sigma^{\rm Total}[u(t)] & = f(t) & \mbox{in} & \ \ \Omega, \\
\prz{\phi^{\rm V}}{t}(t) + \alpha \phi^{\rm V}(t) & = e[u(t)]  & \mbox{in} & \ \ \Omega,
\label{model:SW_phi_eq2}
\end{align}
\end{subequations}
with
\[
\sigma^{\rm Total}[u] = \sigma^{\rm E}[u] - C\phi^{\rm V} = C (e[u] - \phi^{\rm V}).
\]
The difference between~\eqref{prob:zener_strong}~(\eqref{prob:zener_strong_eq1}, \eqref{prob:zener_strong_eq2}) with~$\eta = 1$ and~\eqref{model:SW_phi} is in the second equations; in~\eqref{model:SW_phi_eq2} $e[u]$ is employed instead of~$\sigma[u,\phi]$ in~\eqref{prob:zener_strong_eq2}.
The gradient flow structure for~\eqref{model:SW_phi} for~$\alpha \ge 1$ can be derived similarly as in Section~\ref{sec:energy_decay}, cf.~Remark~\ref{rmk:energy_for_model_SW_phi} for details.
%
%
\section{The gradient flow structure and the energy decay estimate}\label{sec:energy_decay}
%
%
%
%
In this section we show the gradient flow structure and the energy decay estimate for the Maxwell model~\eqref{prob:zener_strong} after introducing a weak formulation of the model.
\par
We set a hypothesis for the given functions in model~\eqref{prob:zener_strong}.
\begin{Hyp}\label{hyp:given_funcs}
The given functions satisfy the following. \smallskip \\
(i)~$f \in C([0,T]; L^2(\Omega; \R^d))$, $g \in C([0,T]; H^1(\Omega; \R^d))$, $q\in C([0,T]; L^2(\Gamma_1; \R^d))$. \smallskip \\
(ii)~$\phi^0\in L^2(\Omega; \R^{d\times d}_\sym)$.
\end{Hyp}
\begin{Rmk}
It holds that $g(\cdot, t)_{|\Gamma_0} \in H^{1/2}(\Gamma_0; \R^d)$ from $g(\cdot, t) \in H^1(\Omega; \R^d)$ and the Trace Theorem~\cite{Nec-1967} for any $t\in [0, T]$.
\end{Rmk}
For a function $g_0 \in H^{1/2}(\Gamma_0;\R^d)$ let $X$, $V(g_0)$, $V$ and $\Psi$ be function spaces defined by
\begin{align*}
X \defeq H^1(\Omega;\R^d),
\ \
V(g_0) \defeq \bigl\{ v \in X;~v_{|\Gamma_0}=g_0 \bigr\},
\ \
V \defeq V(0),
\ \
\Psi \defeq L^2(\Omega;\R^{d\times d}_\sym).
\end{align*}
The inner product in $L^2(\Omega; \R^d)$ is denoted by $(\cdot, \cdot)$.
For the function space~$\Psi$ we use two inner products, $(\cdot, \cdot)_\Psi$ and $(\cdot, \cdot)_C$, defined by
\begin{align*}
(\phi, \psi)_\Psi & \defeq \int_\Omega \phi : \psi~dx,\quad (\phi, \psi)_C \defeq ( C\phi, \psi)_\Psi,
\end{align*}
which yield the norms~$\|\psi\|_\Psi \defeq (\psi, \psi)_\Psi^{1/2}$ and~$\|\psi\|_C \defeq (\psi, \psi)_C^{1/2}$, respectively.
\par
From the integration by parts, we obtain the weak formulation of model~\eqref{prob:zener_strong}; find $\{(u(t),\phi(t)) \in V(g(t)) \times \Psi;\ t\in (0,T) \}$ such that, for $t\in (0, T)$,
\begin{subequations}\label{prob:maxwell_weak}
\begin{align}
\bigl( \sigma[u(t), \phi(t)], e[v] \bigr)_\Psi & = \ell_t (v), & \forall v \in V,
\label{prob:maxwell_weak_eq1} \\
\eta \prz{\phi}{t}(t) + \alpha \phi(t) - \sigma[u(t),\phi(t)] & = 0 & \mbox{in}\ \Psi,
\label{prob:maxwell_weak_eq2}
\end{align}
\end{subequations}
with $\phi(0) = \phi^0$, where $\ell_t \in V^\prime$ is a linear form on $V$ defined by
\begin{align*}
\ell_t (v) & \defeq ( f(t), v ) + \int_{\Gamma_1} q(t) \cdot v~ds.
\end{align*}
\par
In the rest of Section~\ref{sec:energy_decay}, we suppose the condition:
\begin{align}
\prz{f}{t} = 0,
\quad
\prz{g}{t} = 0,
\quad
\prz{q}{t} = 0,
\label{cond:given_data_indep_of_time}
\end{align}
and that there exists a unique solution to~\eqref{prob:maxwell_weak}.
The linear form~$\ell_t$ is simply denoted by $\ell$ under~\eqref{cond:given_data_indep_of_time}.
\begin{Rmk}
Condition~\eqref{cond:given_data_indep_of_time} is not always assumed in the following sections.
In fact, condition~\eqref{cond:given_data_indep_of_time} is not assumed in Theorems~\ref{thm:solvability_tau} and~\ref{thm:solvability_fem},
while it is assumed in Theorems~\ref{thm:gradient_flow_continuous}, \ref{thm:gradient_flow_tau}, and~\ref{thm:gradient_flow_fem}.
\end{Rmk}
\par
We define an energy~$E (\cdot, \cdot): V(g)\times \Psi \to \R$ for model~\eqref{prob:zener_strong} by
\begin{align}
E (u, \phi) \defeq \fz{1}{2} \| e[u]-\phi \|_C^2 + \fz{\alpha}{2} \|\phi\|_\Psi^2 - \ell (u),
\end{align}
which has the following properties:
\begin{subequations}\label{eqns:derivative_E}
\begin{align}
(\pz_u E) (u, \phi) [v] & \defeq \fz{d}{d\varepsilon} E (u+\varepsilon v, \phi)_{|\varepsilon = 0} = (\sigma[u, \phi], e[v])_\Psi - \ell(v),
\label{eq:derivative_E_wrt_u}\\
(\pz_\phi E) (u, \phi) [\psi] & \defeq \fz{d}{d\varepsilon} E (u, \phi + \epsilon \psi)_{|\varepsilon = 0} = (\alpha \phi - \sigma[u, \phi], \psi)_\Psi,
\label{eq:derivative_E_wrt_phi}
\end{align}
\end{subequations}
for $u \in V(g)$, $v \in V$ and $\phi, \psi \in \Psi$.
We also define an energy $E_\ast: \Psi \to \R$ and its (G\^ateaux) derivative $(\pz E_\ast) (\phi) = (\pz E_\ast) (\phi) [\cdot]: \Psi \to \R$ by
\begin{subequations}\label{eqns:E_ast_etc}
\begin{align}
E_\ast (\psi) & \defeq \min_{v \in V(g)} E(v, \psi) = E(\ol{u} (\psi), \psi), & \psi & \in \Psi, \\
(\pz E_\ast) (\phi) [\psi] & \defeq \fz{d}{d\varepsilon} E_\ast (\phi+\varepsilon\psi)_{|\varepsilon = 0}, & \psi & \in \Psi,
\end{align}
\end{subequations}
where $\ol{u} (\psi) \in V(g)$ is the minimizer of~$E(v, \psi)$ defined by
\begin{align}
\ol{u} (\psi) \defeq \argmin_{v \in V(g)} E(v, \psi).
\label{def:bar_u}
\end{align}
\par
In the next theorem it is shown that the solution of~\eqref{prob:maxwell_weak} has a gradient flow structure under some assumptions.
\begin{Thm}[Gradient flow structure for the continuous model]\label{thm:gradient_flow_continuous}
Suppose that Hypotheses~\ref{hyp:tensor_C} and~\ref{hyp:given_funcs} and~\eqref{cond:given_data_indep_of_time} hold and that $(u, \phi) \in C^1([0,T]; X \times \Psi)$ is a solution to~\eqref{prob:maxwell_weak}.
Then, $(u, \phi)$ satisfies the following for any $t \in (0, T)$:
\smallskip\\
(i)~Gradient flow structure:
\begin{align}
\Bigl( \eta \prz{\phi}{t} (t), \psi \Bigr)_\Psi = - (\pz E_\ast) (\phi (t) ) [\psi],\quad \forall \psi \in \Psi.
\label{eq:gradient_flow_continuous}
\end{align}
(ii)~Energy decay estimate:
\begin{align}
\fz{d}{dt} E(u(t), \phi(t)) = -\eta \Bigl\| \prz{\phi}{t} (t) \Bigr\|_\Psi^2 \le 0.
\label{energy_decay_continuous}
\end{align}
\end{Thm}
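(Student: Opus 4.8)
The plan is to prove part~(i) by computing the Gâteaux derivative $(\pz E_\ast)(\phi)[\psi]$ explicitly via the chain rule, exploiting the fact that $\ol u(\phi)$ is a minimizer, and then to identify the resulting expression with $\sigma[u,\phi] - \alpha\phi$ using the weak formulation~\eqref{prob:maxwell_weak}. Part~(ii) will then follow by differentiating $t \mapsto E(u(t),\phi(t))$ along the flow and invoking the gradient-flow identity together with the first equation~\eqref{prob:maxwell_weak_eq1}.

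First I would fix $\phi \in \Psi$ and $\psi \in \Psi$, and differentiate $\varepsilon \mapsto E_\ast(\phi + \varepsilon\psi) = E(\ol u(\phi+\varepsilon\psi), \phi+\varepsilon\psi)$ at $\varepsilon = 0$. Using that $E(\cdot,\cdot)$ is (by Hypothesis~\ref{hyp:tensor_C}) a smooth, in fact quadratic-plus-linear, functional on $V(g)\times\Psi$, the chain rule gives
\begin{align*}
(\pz E_\ast)(\phi)[\psi] = (\pz_u E)(\ol u(\phi),\phi)\Bigl[\tfrac{d}{d\varepsilon}\ol u(\phi+\varepsilon\psi)_{|\varepsilon=0}\Bigr] + (\pz_\phi E)(\ol u(\phi),\phi)[\psi].
\end{align*}
Here I would need to justify that $\varepsilon \mapsto \ol u(\phi+\varepsilon\psi)$ is differentiable into $V(g)$; this is the one genuine technical point. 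Since $\ol u(\psi)$ solves the linear elliptic variational problem $(\sigma[\ol u(\psi),\psi],e[v])_\Psi = \ell(v)$ for all $v\in V$ — which is uniquely solvable by Korn's inequality and the positivity~\eqref{ieq:positivity} — the map $\psi \mapsto \ol u(\psi)$ is affine in $\psi$ (its derivative being the solution operator of the same elliptic problem applied to the source $\psi$), hence differentiable; moreover the derivative $w \defeq \tfrac{d}{d\varepsilon}\ol u(\phi+\varepsilon\psi)_{|\varepsilon=0}$ lies in $V$ (the Dirichlet data $g$ is independent of $\varepsilon$). The key simplification: because $\ol u(\phi)$ is the minimizer, $(\pz_u E)(\ol u(\phi),\phi)[v] = 0$ for all $v \in V$, and in particular for $v = w$. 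Therefore the first term vanishes and $(\pz E_\ast)(\phi)[\psi] = (\pz_\phi E)(\ol u(\phi),\phi)[\psi] = (\alpha\phi - \sigma[\ol u(\phi),\phi],\psi)_\Psi$ by~\eqref{eq:derivative_E_wrt_phi}. Since the solution $(u(t),\phi(t))$ of~\eqref{prob:maxwell_weak} satisfies~\eqref{prob:maxwell_weak_eq1}, which is exactly the condition defining $\ol u(\phi(t))$, we have $u(t) = \ol u(\phi(t))$, and then~\eqref{prob:maxwell_weak_eq2} rearranges to $\eta\,\pz_t\phi(t) = \sigma[u(t),\phi(t)] - \alpha\phi(t) = -(\alpha\phi(t) - \sigma[u(t),\phi(t)])$, giving~\eqref{eq:gradient_flow_continuous}.

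For part~(ii), I would differentiate $t \mapsto E(u(t),\phi(t))$ using the chain rule:
\begin{align*}
\fz{d}{dt}E(u(t),\phi(t)) = (\pz_u E)(u(t),\phi(t))\Bigl[\prz{u}{t}(t)\Bigr] + (\pz_\phi E)(u(t),\phi(t))\Bigl[\prz{\phi}{t}(t)\Bigr].
\end{align*}
The first term is zero because $\pz_t u(t) \in V$ (differentiating the time-independent Dirichlet condition $u(t)_{|\Gamma_0} = g$) and $(\pz_u E)(u(t),\phi(t))[v] = (\sigma[u(t),\phi(t)],e[v])_\Psi - \ell(v) = 0$ for all $v\in V$ by~\eqref{prob:maxwell_weak_eq1}. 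The second term equals $(\alpha\phi(t) - \sigma[u(t),\phi(t)],\pz_t\phi(t))_\Psi = -(\eta\,\pz_t\phi(t),\pz_t\phi(t))_\Psi = -\eta\|\pz_t\phi(t)\|_\Psi^2$ by~\eqref{prob:maxwell_weak_eq2}, which is $\le 0$ since $\eta > 0$. This yields~\eqref{energy_decay_continuous} and completes the proof.

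The main obstacle is the differentiability of the minimizer map $\psi \mapsto \ol u(\psi)$; everything else is a direct application of the chain rule plus the Euler–Lagrange/weak-form identities. I expect this to be handled cleanly by observing that $\ol u$ is affine, so its difference quotient is literally constant in $\varepsilon$ and equals the solution of an auxiliary linear elliptic problem with right-hand side depending linearly on $\psi$; well-posedness of that problem is again Korn plus~\eqref{ieq:positivity}. One should also remark that the hypothesis $(u,\phi)\in C^1([0,T];X\times\Psi)$ is what legitimizes the time-differentiation in part~(ii).
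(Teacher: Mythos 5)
Your proposal is correct and follows essentially the same route as the paper: the paper isolates the two key steps as Lemma~\ref{lem:derivative_bar_u} (well-posedness and affineness of $\psi\mapsto\ol{u}(\psi)$ via Lax--Milgram, giving $(\pz\ol{u})(\phi)=A^{-1}B$) and Lemma~\ref{lem:gradient_E_ast} (chain rule plus the Euler--Lagrange identity $(\pz_u E)(\ol{u}(\phi),\phi)[v]=0$ killing the first term), then concludes exactly as you do by identifying $u(t)=\ol{u}(\phi(t))$ and differentiating $E$ in time. Your explicit appeal to Korn's inequality for coercivity on $V$ is a small additional precision the paper leaves implicit, but it does not change the argument.
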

\par
We prove the theorem after establishing the next two lemmas, where for $\phi \in \Psi$, the function~$\ol{u}(\phi)$ defined in~\eqref{def:bar_u} and an operator $(\pz \ol{u}) (\phi)$ defined by
\begin{align}
(\pz \ol{u}) (\phi) [\psi] & \defeq \fz{d}{d\varepsilon} \ol{u} (\phi+\varepsilon \psi)_{|\varepsilon = 0},\quad \forall \psi \in \Psi,
\label{eq:derivative_bar_u}
\end{align}
are studied.
\begin{Lem}\label{lem:derivative_bar_u}
Suppose that Hypotheses~\ref{hyp:tensor_C} and~\ref{hyp:given_funcs}-(i) and~\eqref{cond:given_data_indep_of_time} hold.
Then, for any $\phi\in \Psi$ the function $\ol{u}(\phi) \in V(g)$ is well defined.
Moreover, there exists a unique operator $(\pz \ol{u}) (\phi) \in \mathcal{L}(\Psi, V)$ such that
\[
(\pz \ol{u}) (\phi) = A^{-1}B,
\]
where $A \in \mathcal{L}(V, V^\prime)$ and $B \in \mathcal{L}(\Psi, V^\prime)$, and they are defined in~\eqref{def:A_B}.
\end{Lem}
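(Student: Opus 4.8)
The plan is to reduce both assertions to the Lax--Milgram theorem applied to the symmetric bilinear form
\[
a(u,v) \defeq (e[u], e[v])_C = (Ce[u], e[v])_\Psi, \qquad u, v \in X,
\]
which is the bilinear form underlying the operators $A$ and $B$ of~\eqref{def:A_B}. First I would check that $a$ is bounded on $X \times X$, using $\|e[v]\|_\Psi \le \|v\|_X$ together with the boundedness of $C$ and the equivalence of $\|\cdot\|_C$ and $\|\cdot\|_\Psi$ on $\Psi$ (both consequences of Hypothesis~\ref{hyp:tensor_C}), and that $a$ is coercive on $V$: for $v \in V$ one has $a(v,v) = \|e[v]\|_C^2 \ge c_\ast\|e[v]\|_\Psi^2$ by~\eqref{ieq:positivity}, and, since the $(d-1)$-dimensional measure of $\Gamma_0$ is positive, Korn's first inequality gives $\|e[v]\|_\Psi^2 \ge c_K\|v\|_X^2$ for some $c_K > 0$. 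Consequently the operator $A \in \mathcal{L}(V, V^\prime)$ with ${}_{V^\prime}\lA Av, w\rA_V = a(v,w)$ is an isomorphism, $A^{-1} \in \mathcal{L}(V^\prime, V)$, while $B \in \mathcal{L}(\Psi, V^\prime)$ with ${}_{V^\prime}\lA B\psi, v\rA_V = (C\psi, e[v])_\Psi$ is bounded by the same estimates.

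Next I would establish that $\ol u(\phi)$ is well defined for each $\phi \in \Psi$. Since $g \in H^1(\Omega;\R^d)$, the affine subspace $V(g) = g + V$ of $X$ is nonempty; fix any $u_g \in V(g)$ and write $u = u_g + w$ with $w \in V$. Then $w \mapsto E(u_g + w, \phi)$ equals $\tfrac12 a(w,w)$ plus an affine term in $w$ plus a constant, hence is strictly convex and coercive on $V$, so it admits a unique minimizer; equivalently, by~\eqref{eq:derivative_E_wrt_u}, there is a unique $w \in V$ with
\[
a(w, v) = \ell(v) + (C\phi, e[v])_\Psi - a(u_g, v), \qquad \forall v \in V,
\]
and Lax--Milgram furnishes it. Thus $\ol u(\phi) \defeq u_g + w \in V(g)$ is well defined, and independent of the chosen lift $u_g$ since the minimizer of $E(\cdot, \phi)$ over $V(g)$ is intrinsic.

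Finally, the identity above rewrites as $\ol u(\phi) = u_g + A^{-1}\bigl(\ell + B\phi - a(u_g,\cdot)\bigr)$, an affine function of $\phi$ whose $\phi$-linear part is $A^{-1}B\phi$. Hence $\varepsilon \mapsto \ol u(\phi + \varepsilon\psi) = \ol u(\phi) + \varepsilon A^{-1}B\psi$ is affine in $\varepsilon$, so the G\^ateaux derivative~\eqref{eq:derivative_bar_u} exists and equals $A^{-1}B\psi$ for every $\psi \in \Psi$; therefore $(\pz\ol u)(\phi) = A^{-1}B$, which is independent of $\phi$ and lies in $\mathcal{L}(\Psi, V)$ as the composition of $B \in \mathcal{L}(\Psi, V^\prime)$ with $A^{-1} \in \mathcal{L}(V^\prime, V)$. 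Its uniqueness is automatic, being the G\^ateaux derivative of $\ol u$ at $\phi$.

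The argument carries no deep difficulty — its entire substance is the applicability of Lax--Milgram — but the step that requires genuine care is the $V$-coercivity of $a$, i.e.\ the invocation of Korn's first inequality, which is exactly where the standing assumption that $\Gamma_0$ has positive $(d-1)$-dimensional measure enters; everything else is the bookkeeping of the shift $u = u_g + w$ and the observation that the $\phi$-dependence of the reduced variational problem is carried entirely by the bounded linear operator $B$.
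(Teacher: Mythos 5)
Your proposal is correct and follows essentially the same route as the paper: reduce to the variational equation $A\tilde u = B\phi+\tilde\ell$, invoke Lax--Milgram, and read off $(\pz\ol u)(\phi)=A^{-1}B$ from the affine dependence of $\ol u(\phi)$ on $\phi$. The only differences are cosmetic --- you use an arbitrary lift $u_g$ where the paper takes $g$ itself, you argue the minimizer property via strict convexity where the paper verifies it by the direct identity $E(v,\phi)-E(\ol u,\phi)=\tfrac12\|e[v-\ol u]\|_C^2$, and you are slightly more explicit than the paper in naming Korn's first inequality as the source of $V$-coercivity.
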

\begin{proof}
From~\eqref{eq:derivative_E_wrt_u}, if $\ol{u} \in V(g)$ is a minimizer of $E(\cdot,\phi)$, $\ol{u}$ satisfies
\begin{align}\label{proof}
\ell(v) = ( \sigma[\ol{u}, \phi], e[v] )_\Psi = (e[\ol{u} - \phi], e[v])_C, \qquad \forall v \in V.
\end{align}
Setting $\tilde{u} \defeq \ol{u} - g \in V$, we rewrite (\ref{proof}) as
\begin{align*}
A\tilde{u} = B \phi + \tilde{\ell} \quad \mbox{in $V^\prime$,}
\end{align*}
where $A \in \mathcal{L}(V, V^\prime)$, $B \in \mathcal{L}(\Psi, V^\prime)$ and $\tilde{\ell} \in V^\prime$ are defined by
\begin{align}
{}_{V^\prime} \lA A\tilde{u}, v \rA_{V} & \defeq (e[\tilde{u}], e[v] )_C,
\qquad {}_{V^\prime} \lA B \phi, v \rA_{V} \defeq (\phi, e[v] )_C, \label{def:A_B}\\
{}_{V^\prime}\lA \tilde{\ell}, v \rA_{V} & \defeq \ell (v) - (e[g], e[v] )_C. \notag
\end{align}
From the positivity of~$C$, i.e., \eqref{ieq:positivity}, and the Lax--Milgram Theorem, cf., e.g., \cite{Cia-1978},  
$A^{-1} \in \mathcal{L}(V',V)$ holds. Then there exists a unique $\tilde{u} = \tilde{u}(\phi) = A^{-1}(B\phi + \tilde{\ell}) \in V$, which implies that the unique solution of (\ref{proof}) is given by
\begin{align}
\ol{u} = \ol{u}(\phi) := A^{-1}(B\phi + \tilde{\ell}) + g \in V(g).
\label{proof2}
\end{align}
For arbitrary $v \in V(g)$ we have
\begin{align*}
E(v,\phi) - E(\ol{u},\phi) &= \fz{1}{2} \| e[v] - \phi \|^2_C - \fz{1}{2}\| e[\ol{u} - \phi] \|^2_C - \ell(v - \ol{u}) \\
&= \fz{1}{2} (e[v+\ol{u}] - 2\phi, e[v - \ol{u}])_C - (e[\ol{u}] - \phi, e[v - \ol{u}])_C \\
&= \fz{1}{2} \| e[v - \ol{u}]\|^2_C \ge 0 .
\end{align*}
This shows that $\ol{u} = \ol{u}(\phi)$ is the unique minimizer of $E(v,\phi)$ among $v \in V(g)$.
Hence, from \eqref{proof2}, we also conclude that
\begin{align*}
\pz \ol{u}(\phi) = A^{-1}B \in \mathcal{L}(\Psi,V).
\end{align*}
\end{proof}
\begin{Lem}\label{lem:gradient_E_ast}
For $\phi, \psi \in \Psi$, it holds that
\begin{align}
(\pz E_\ast) (\phi) [\psi] = (\alpha \phi - \sigma[\ol{u}(\phi),\phi], \psi)_\Psi.
\end{align}
\end{Lem}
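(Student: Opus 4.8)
The plan is to differentiate the identity $E_\ast(\phi) = E(\ol u(\phi), \phi)$ with respect to $\phi$ in the direction $\psi$ using the chain rule, and then exploit the minimality of $\ol u(\phi)$ to kill the term involving $\pz\ol u(\phi)[\psi]$. First I would fix $\phi, \psi \in \Psi$ and consider the real-valued function $\varepsilon \mapsto E(\ol u(\phi + \varepsilon\psi), \phi + \varepsilon\psi)$, which by Lemma~\ref{lem:derivative_bar_u} is well defined and, since $\ol u(\cdot)$ is affine in its argument by~\eqref{proof2} (hence smooth) and $E$ is a smooth quadratic functional, is differentiable at $\varepsilon = 0$. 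Applying the chain rule together with the definitions~\eqref{eq:derivative_E_wrt_u}, \eqref{eq:derivative_E_wrt_phi} and~\eqref{eq:derivative_bar_u} gives
\begin{align*}
(\pz E_\ast)(\phi)[\psi] = (\pz_u E)(\ol u(\phi), \phi)\bigl[(\pz\ol u)(\phi)[\psi]\bigr] + (\pz_\phi E)(\ol u(\phi), \phi)[\psi].
\end{align*}

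Next I would observe that the first term on the right-hand side vanishes: since $\ol u(\phi)$ is the minimizer of $E(\cdot, \phi)$ over $V(g)$, and since $(\pz\ol u)(\phi)[\psi] \in V$ (it lies in the tangent space $V = V(0)$ because $\ol u(\phi + \varepsilon\psi) - \ol u(\phi) \in V$ for all $\varepsilon$, the boundary value $g$ being fixed under~\eqref{cond:given_data_indep_of_time}), the first-order optimality condition~\eqref{proof} gives $(\pz_u E)(\ol u(\phi), \phi)[w] = (\sigma[\ol u(\phi), \phi], e[w])_\Psi - \ell(w) = 0$ for every $w \in V$, in particular for $w = (\pz\ol u)(\phi)[\psi]$. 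Therefore only the second term survives, and by~\eqref{eq:derivative_E_wrt_phi},
\begin{align*}
(\pz E_\ast)(\phi)[\psi] = (\pz_\phi E)(\ol u(\phi), \phi)[\psi] = (\alpha\phi - \sigma[\ol u(\phi), \phi], \psi)_\Psi,
\end{align*}
which is the claimed formula.

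The only point requiring a little care — the main (mild) obstacle — is justifying the chain-rule step rigorously, i.e., that $\varepsilon \mapsto E(\ol u(\phi+\varepsilon\psi), \phi+\varepsilon\psi)$ is genuinely differentiable and that its derivative decomposes as the sum of partials composed with the inner derivative. This follows because $\varepsilon \mapsto \ol u(\phi+\varepsilon\psi) = A^{-1}B(\phi+\varepsilon\psi) + A^{-1}\tilde\ell + g$ is an affine (hence $C^1$) map from $\R$ into $V(g)$ with derivative $A^{-1}B\psi = (\pz\ol u)(\phi)[\psi]$ independent of $\varepsilon$, and $E$ restricted to the affine subspace $V(g)\times\Psi$ is a polynomial of degree two in its arguments, so the composition is a smooth function of $\varepsilon$; differentiating the explicit quadratic expression for $E$ and collecting terms reproduces exactly the two-term decomposition above. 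Alternatively, one can avoid the chain rule entirely: write out $E_\ast(\phi+\varepsilon\psi) = E(\ol u(\phi+\varepsilon\psi), \phi+\varepsilon\psi)$ using~\eqref{proof2}, expand the quadratic in powers of $\varepsilon$, use the optimality relation~\eqref{proof} to cancel the cross terms, and read off the coefficient of $\varepsilon$; this is a short direct computation and sidesteps any differentiability subtleties.
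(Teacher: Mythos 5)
Your proposal is correct and follows essentially the same route as the paper: differentiate $E_\ast(\phi+\varepsilon\psi)=E(\ol u(\phi+\varepsilon\psi),\phi+\varepsilon\psi)$ by the chain rule, cancel the $(\pz_u E)$ term via the optimality of $\ol u(\phi)$ (noting $(\pz\ol u)(\phi)[\psi]\in V$), and read off the result from \eqref{eq:derivative_E_wrt_phi}. The extra care you take in justifying the chain-rule step via the affine form of $\ol u(\cdot)$ from \eqref{proof2} is a welcome addition but not a departure from the paper's argument.
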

\begin{proof}
Using~\eqref{eq:derivative_E_wrt_u}, \eqref{def:bar_u} and Lemma~\ref{lem:derivative_bar_u} and noting that $(\pz_u E) (\ol{u} (\psi), \psi) [v] = 0$ holds for any $v \in V$ and $\psi \in \Psi$,
we have
\begin{align*}
(\pz E_\ast) (\phi) [\psi] & = \fz{d}{d\varepsilon} E_\ast (\phi + \epsilon \psi)_{|\varepsilon = 0} = \fz{d}{d\varepsilon} E(\ol{u} (\phi + \epsilon \psi), \phi + \epsilon \psi)_{|\varepsilon = 0} \\
& = \Bigl[ (\pz_u E) (\ol{u} (\phi + \epsilon \psi), \phi + \epsilon \psi) [ (\pz \ol{u}) (\phi + \epsilon \psi) [\psi] ]
+ (\pz_\phi E) (\ol{u} (\phi + \epsilon \psi), \phi + \epsilon \psi) [\psi] \Bigr]_{|\varepsilon = 0} \\
& = (\pz_u E) (\ol{u} (\phi), \phi) [ (\pz \ol{u}) (\phi) [\psi] ]
+ (\pz_\phi E) (\ol{u} (\phi), \phi) [\psi] \\
& = (\pz_\phi E) (\ol{u} (\phi), \phi) [\psi] 
= (\alpha \phi - \sigma[\ol{u}(\phi), \phi], \psi)_\Psi,
\end{align*}
which completes the proof.
\end{proof}
\par
From this lemma, Theorem~\ref{thm:gradient_flow_continuous} is shown as follows.
\medskip\\
{\it Proof of Theorem~\ref{thm:gradient_flow_continuous}.}
\ \ 
Let $t \in (0, T)$ be fixed arbitrarily.
We omit ``$(t)$'' from $u(t)$ and $\phi(t)$ whenever convenient.
Since $(u(t), \phi(t))$ is a solution to~\eqref{prob:maxwell_weak}, we obtain from~\eqref{prob:maxwell_weak_eq1}, \eqref{eq:derivative_E_wrt_u} and \eqref{def:bar_u} that $u(t) = \ol{u}(\phi(t))$.
Together with Lemma~\ref{lem:gradient_E_ast} and~\eqref{prob:maxwell_weak_eq1} we have
\begin{align}
(\pz E_\ast) (\phi) [\psi] 
= (\alpha \phi - \sigma[u, \phi], \psi)_\Psi
= \Bigl( -\eta \prz{\phi}{t}, \psi \Bigr)_\Psi
\label{eq:gradient_flow_continuous_a}
\end{align}
for any $\psi \in \Psi$.
Hence \eqref{eq:gradient_flow_continuous} holds.
\par
From~\eqref{eqns:derivative_E} and~\eqref{eq:gradient_flow_continuous_a} we have
\begin{align*}
\fz{d}{dt} E(u, \phi) & = (\pz_u E)(u, \phi)\Bigl[ \prz{u}{t} \Bigr] + (\pz_\phi E)(u, \phi) \Bigl[ \prz{\phi}{t} \Bigr] = (\pz_\phi E)(u, \phi) \Bigl[ \prz{\phi}{t} \Bigr] \\
& = \Bigl( \alpha \phi - \sigma[u, \phi], \prz{\phi}{t} \Bigr)_\Psi = -\eta \Bigl\| \prz{\phi}{t} \Bigr\|_\Psi^2 \le 0,
\end{align*}
which implies~\eqref{energy_decay_continuous}.
\qed
\begin{Rmk}\label{rmk:energy_for_model_SW_phi}
In the case of model~\eqref{model:SW_phi}, a corresponding energy is defined by
\begin{align*}
\tilde{E}[u,\phi^\mathrm{V}]
\defeq \fz{1}{2} \| e[u] - \phi^\mathrm{V} \|_C^2 + \fz{\alpha -1}{2} \|\phi^\mathrm{V}\|_C^2 - \ell(u),
\end{align*}
where it is natural to consider $\alpha \ge 1$ for non-negative energy~$\tilde{E}$.
%
Letting
\begin{align*}
\tilde{E}_\ast (\phi^\mathrm{V}) \defeq \min_{u\in V(g)}\tilde{E} (u, \phi^\mathrm{V}),
\qquad
(\pz\tilde{E}_\ast) ( \phi^\mathrm{V} )[\psi] \defeq \fz{d}{d\varepsilon} \tilde{E}_\ast (\phi^\mathrm{V}+\varepsilon\psi)_{|\varepsilon = 0},
\end{align*}
similarly to~\eqref{eqns:E_ast_etc}, we have the following.
\smallskip\\
(i)~Gradient flow structure:
\begin{align*}
\Bigl( \prz{\phi^\mathrm{V}}{t}(t), \psi \Bigr)_C = - (\pz\tilde{E}_\ast) ( \phi^\mathrm{V} (t) )[\psi], \quad \forall \psi \in \Psi.
\end{align*}
(ii)~Energy decay estimate:
\begin{align*}
\fz{d}{dt} \tilde{E}[u(t),\phi^\mathrm{V}(t)] = - \biggl\| \prz{\phi^\mathrm{V}}{t} \biggr\|_C^2 \le 0.
\end{align*}
\end{Rmk}
%
%
\section{The time-discrete Maxwell model}\label{sec:time_discretization}
%
%
%
%
\subsection{Existence and uniqueness for the time-discrete model}
We discretize the Maxwell model~\eqref{prob:zener_strong} in time.
Let $\tau > 0$ be a time increment, and let $N_T \defeq \lfloor T/\tau \rfloor$ and $t^k \defeq k\tau$ for $k=0,\ldots, N_T$.
In the following we set $\rho^k\defeq \rho(\cdot, t^k)$ for a function $\rho$ defined in $\Omega \times (0,T)$ or on $\Gamma_i \times (0,T)$, $i=0, 1$.
The time-discrete problem for~\eqref{prob:zener_strong} is to find $\{(u_\tau^k,\phi_\tau^k):\Omega \to \R^d \times \R^{d\times d}_\sym ;\ k=0, \ldots, N_T\}$ such that
\begin{subequations}\label{prob:maxwell_timediscrete_strong}
\begin{align}
-\nabla \cdot \sigma[u_\tau^k,\phi_\tau^k] & = f^k & \mbox{in} & \ \ \Omega, \quad k=0, \ldots, N_T, \\
  \eta \ol{D}_\tau \phi_\tau^k + \alpha\phi_\tau^k - \sigma[u_\tau^k,\phi_\tau^k] & = 0 & \mbox{in} & \ \ \Omega, \quad k=1, \ldots, N_T, \\
  u_\tau^k & = g^k & \mbox{on} & \ \ \Gamma_0, \quad k=0, \ldots, N_T, \\
  \sigma[u_\tau^k,\phi_\tau^k]n & = q^k & \mbox{on} & \ \ \Gamma_1, \quad k=0, \ldots, N_T, \\
  \phi_\tau^0 & = \phi^0 & \mbox{in} & \ \ \Omega,
\end{align}
\end{subequations}
where $\ol{D}_\tau$ is the backward difference operator $\ol{D}_\tau \rho^k \defeq (\rho^k-\rho^{k-1})/\tau$.
\par
From the integration by parts, we get the weak formulation of~\eqref{prob:maxwell_timediscrete_strong}; find $\{(u_\tau^k,\phi_\tau^k)\in V(g^k) \times \Psi;\ k=0,\ldots, N_T \}$ such that
\begin{subequations}\label{prob:maxwell_timediscrete_weak}
\begin{align}
\bigl( \sigma[u_\tau^k,\phi_\tau^k], e[v] \bigr)_\Psi & = \ell^k(v), & \forall v \in V, \quad k & = 0, \ldots, N_T,
\label{prob:maxwell_timediscrete_weak_eq1} \\
\eta \ol{D}_\tau \phi_\tau^k + \alpha \phi_\tau^k - \sigma[u_\tau^k,\phi_\tau^k] &= 0 & \mbox{in} \ \Psi, \quad k & = 1, \ldots, N_T,
\label{prob:maxwell_timediscrete_weak_eq2}
\end{align}
\end{subequations}
with $\phi_\tau^0 = \phi^0$, where $\ell^k \in V^\prime$ is a linear form on $V$ defined by, for $k=0,\ldots,N_T$,
\begin{align*}
\ell^k (v) & \defeq ( f^k, v ) + \int_{\Gamma_1} q^k \cdot v~ds.
\end{align*}
\par
In the next theorem we state and prove the uniqueness and existence of solutions to~\eqref{prob:maxwell_timediscrete_weak} from the Lax--Milgram Theorem.
%
%
%
%
%
\begin{Thm}[Existence and uniqueness for the time-discrete model]\label{thm:solvability_tau}
Suppose that Hypotheses~\ref{hyp:tensor_C} and~\ref{hyp:given_funcs} hold.
Then, there exists a unique solution~$\{(u_\tau^k,\phi_\tau^k)\in V(g^k) \times \Psi;\ k=0,\ldots, N_T \}$ to~\eqref{prob:maxwell_timediscrete_weak}.
\end{Thm}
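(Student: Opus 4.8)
The plan is to proceed by induction on the time level $k$, solving for $(u_\tau^k, \phi_\tau^k)$ one step at a time. For $k=0$ the function $\phi_\tau^0 = \phi^0 \in \Psi$ is prescribed by Hypothesis~\ref{hyp:given_funcs}-(ii), and then $u_\tau^0$ is obtained by solving the single equation~\eqref{prob:maxwell_timediscrete_weak_eq1} at $k=0$: writing $u_\tau^0 = \tilde u + g^0$ with $\tilde u \in V$, this reads $(e[\tilde u], e[v])_C = \ell^0(v) - (e[g^0], e[v])_C + (\phi^0, e[v])_C$ for all $v \in V$, which is exactly the type of variational problem handled in the proof of Lemma~\ref{lem:derivative_bar_u}; the bilinear form $(e[\cdot], e[\cdot])_C$ is bounded and, by the positivity~\eqref{ieq:positivity} of $C$ together with Korn's inequality on $V$ (using that $\Gamma_0$ has positive $(d-1)$-measure), coercive on $V$, so Lax--Milgram gives a unique $u_\tau^0 \in V(g^0)$. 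In fact $u_\tau^0 = \ol u(\phi^0)$ in the notation of~\eqref{def:bar_u}.

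For the inductive step, suppose $\phi_\tau^{k-1} \in \Psi$ is known. The two equations~\eqref{prob:maxwell_timediscrete_weak_eq1}--\eqref{prob:maxwell_timediscrete_weak_eq2} at level $k$ must be solved simultaneously for $(u_\tau^k, \phi_\tau^k)$. The key idea is to eliminate $\phi_\tau^k$ algebraically using~\eqref{prob:maxwell_timediscrete_weak_eq2}. Expanding $\ol D_\tau \phi_\tau^k = (\phi_\tau^k - \phi_\tau^{k-1})/\tau$ and $\sigma[u_\tau^k,\phi_\tau^k] = C(e[u_\tau^k] - \phi_\tau^k)$, equation~\eqref{prob:maxwell_timediscrete_weak_eq2} becomes
\begin{align*}
\Bigl( \frac{\eta}{\tau} + \alpha \Bigr) \phi_\tau^k + C\phi_\tau^k = \frac{\eta}{\tau}\phi_\tau^{k-1} + C e[u_\tau^k] \quad \mbox{in } \Psi,
\end{align*}
i.e. $\bigl( (\eta/\tau + \alpha) I + C \bigr) \phi_\tau^k = (\eta/\tau)\phi_\tau^{k-1} + C e[u_\tau^k]$. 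Since $C$ is symmetric positive definite (Hypothesis~\ref{hyp:tensor_C}) and $\eta/\tau + \alpha > 0$, the operator $M \defeq (\eta/\tau + \alpha) I + C$ is a symmetric positive definite, hence boundedly invertible, operator on $\Psi$ (pointwise on $\R^{d\times d}_\sym$, so its inverse acts pointwise as well). Thus $\phi_\tau^k = M^{-1}\bigl( (\eta/\tau)\phi_\tau^{k-1} + C e[u_\tau^k] \bigr)$, an affine function of $u_\tau^k$.

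Substituting this expression into~\eqref{prob:maxwell_timediscrete_weak_eq1} yields a single variational equation for $u_\tau^k \in V(g^k)$: with $u_\tau^k = \tilde u + g^k$, $\tilde u \in V$,
\begin{align*}
\bigl( C(I - M^{-1}C) e[\tilde u], e[v] \bigr)_\Psi = \ell^k(v) - \bigl( C(I - M^{-1}C)e[g^k], e[v]\bigr)_\Psi + \frac{\eta}{\tau}\bigl( C M^{-1}\phi_\tau^{k-1}, e[v]\bigr)_\Psi
\end{align*}
for all $v \in V$. To apply Lax--Milgram I must check that the bilinear form $a(w,v) \defeq (C(I - M^{-1}C)e[w], e[v])_\Psi$ is bounded and coercive on $V$. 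Boundedness is immediate since all operators involved are bounded. For coercivity, the point is that $C(I - M^{-1}C)$, acting pointwise on $\R^{d\times d}_\sym$, equals $C M^{-1}\bigl( (\eta/\tau + \alpha) I \bigr) = (\eta/\tau + \alpha) C M^{-1}$ because $M = (\eta/\tau+\alpha)I + C$ gives $I - M^{-1}C = M^{-1}(M - C) = (\eta/\tau + \alpha)M^{-1}$; since $C$ and $M^{-1}$ are commuting symmetric positive definite operators, $C M^{-1}$ is symmetric positive definite with a spectral lower bound depending on $c_\ast$, $\|C\|$, $\eta$, $\tau$, $\alpha$. Hence $\xi : C(I - M^{-1}C)\xi \ge \kappa |\xi|^2$ for some $\kappa > 0$ and all $\xi \in \R^{d\times d}_\sym$, and combined with Korn's inequality on $V$ this gives coercivity. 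Lax--Milgram then yields a unique $\tilde u \in V$, hence a unique $u_\tau^k \in V(g^k)$, and $\phi_\tau^k$ is determined by the formula above; by construction the pair solves both equations at level $k$, and uniqueness is clear since any solution must satisfy these two derived relations. The induction closes.

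The main obstacle is the coercivity verification for the reduced bilinear form $a(\cdot,\cdot)$ — one must recognize the algebraic simplification $I - M^{-1}C = (\eta/\tau + \alpha)M^{-1}$ and then argue the pointwise positive definiteness of $C M^{-1}$ on $\R^{d\times d}_\sym$, uniformly in $x$ (which is where homogeneity and isotropy of $C$ in Hypothesis~\ref{hyp:tensor_C} make life easy, since $M$ and $M^{-1}$ are then constant-coefficient). Everything else — Korn's inequality, Lax--Milgram, the affine elimination of $\phi_\tau^k$ — is routine.
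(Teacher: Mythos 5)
Your proof follows essentially the same route as the paper: induction over $k$, pointwise elimination of $\phi_\tau^k$ via the invertible fourth-order tensor $M=(\eta/\tau+\alpha)I+C$ (the paper's $D$ in~\eqref{def:tensor_D}), substitution into the force-balance equation, and Lax--Milgram for the reduced displacement problem. Your explicit coercivity verification via the identity $I-M^{-1}C=(\eta/\tau+\alpha)M^{-1}$ is a worthwhile addition rather than a deviation, since the reduced elasticity tensor is $C(I-D^{-1}C)$ (the sign in the paper's displayed $C(I+D^{-1}C)$ in~\eqref{prob:maxwell_timediscrete_weak_eq1_without_phi_k} appears to be a typo) and its pointwise positive definiteness is exactly what the paper asserts without proof.
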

%
%
\begin{proof}
Since $\phi_\tau^0 (= \phi^0) \in \Psi$ is known, there exists a unique solution~$(u_\tau^0, \phi_\tau^0) \in V(g^0) \times \Psi$ of~\eqref{prob:maxwell_timediscrete_weak_eq1} with $k=0$ from the positivity of $C$ and the Lax--Milgram Theorem.
\par
We show the existence of solutions to~\eqref{prob:maxwell_timediscrete_weak} ($k \ge 1$) by induction.
Supposing that $\phi_\tau^{k-1} \in \Psi$ is given for a fixed $k\in \{1,\ldots,N_T\}$, we show that there exists a solution $(u_\tau^k, \phi_\tau^k) \in V(g^k) \times \Psi$ to~\eqref{prob:maxwell_timediscrete_weak}.
The equation~\eqref{prob:maxwell_timediscrete_weak_eq2} yields an explicit representation of~$\phi_\tau^k$,
\begin{align}
\phi_\tau^k = D^{-1} \Bigl( Ce[u_\tau^k] + \fz{\eta}{\tau} \phi_\tau^{k-1} \Bigr),
\label{prob:maxwell_timediscrete_weak_eq2_explicit}
\end{align}
where $D$ is a fourth-order tensor defined by
\begin{align}
D \defeq \Bigl( \fz{\eta}{\tau} + \alpha \Bigr) I + C
\label{def:tensor_D}
\end{align}
for the (fourth-order) identity tensor~$I$ with $I_{ijkl} \defeq \delta_{ik} \delta_{jl}$.
Substituting~\eqref{prob:maxwell_timediscrete_weak_eq2_explicit} into~\eqref{prob:maxwell_timediscrete_weak_eq1}, we have
\begin{align}
\Bigl( C(I+D^{-1}C) e[u_\tau^k], e[v] \Bigr)_\Psi = \Bigl( \fz{\eta}{\tau} CD^{-1} \phi_\tau^{k-1}, e[v] \Bigr)_\Psi + \ell^k (v), \quad \forall v \in V.
\label{prob:maxwell_timediscrete_weak_eq1_without_phi_k}
\end{align}
We note that \eqref{prob:maxwell_timediscrete_weak_eq1_without_phi_k} can be seen as a system of linear elasticity with a positive elasticity tensor~$C(I+D^{-1}C)$.
From the Lax--Milgram Theorem we have the uniqueness and existence of $u_\tau^k \in V(g^k)$ to~\eqref{prob:maxwell_timediscrete_weak_eq1_without_phi_k}.
We obtain $\phi_\tau^k \in \Psi$ from~\eqref{prob:maxwell_timediscrete_weak_eq2_explicit}.
It is obvious that $(u_\tau^k, \phi_\tau^k) \in V(g^k) \times \Psi$ satisfies~\eqref{prob:maxwell_timediscrete_weak}.
Thus, we find a solution~$\{(u_\tau^k,\phi_\tau^k)\in V(g^k) \times \Psi;\ k=1,\ldots, N_T \}$ to~\eqref{prob:maxwell_timediscrete_weak} inductively.
\par
Next we show the uniqueness.
$(u_\tau^0, \phi_\tau^0) \in V(g^0)\times \Psi$ is uniquely determined as mentioned in the beginning of the proof.
By linearity, we assume without loss of generality that $f^k = 0$, $q^k = 0$, $g^k = 0$ and $\phi_\tau^{k-1} = 0$ for any $k \in \{1,\ldots,N_T\}$.
We show that $(u_\tau^k, \phi_\tau^k) = (0, 0) \in V\times \Psi$.
From~\eqref{prob:maxwell_timediscrete_weak_eq1_without_phi_k} and $\ell^k=0$ we have $u_\tau^k = 0 \in V$, which yields from~\eqref{prob:maxwell_timediscrete_weak_eq2} that $\phi_\tau^k = 0 \in \Psi$.
\end{proof}
\subsection{Gradient flow structure and energy decay estimate for the time-discrete model}
The Maxwell model~\eqref{prob:zener_strong} has the gradient flow structure~\eqref{eq:gradient_flow_continuous}.
Here, we present a time-discrete version of the gradient flow structure~\eqref{eq:gradient_flow_continuous} and an energy decay estimate for the solution of~\eqref{prob:maxwell_timediscrete_weak}, which is a discrete version of~\eqref{energy_decay_continuous}.
Let $E_\tau^k \defeq E (u_\tau^k, \phi_\tau^k)$ for the solution $\{(u_\tau^k,\phi_\tau^k)\}_{k=1}^{N_T}$ of~\eqref{prob:maxwell_timediscrete_weak}.
When condition~\eqref{cond:given_data_indep_of_time} holds true, i.e., $\ell^k$ is independent of $k$, we omit the superscript~$k$ from $\ell^k$.
\begin{Thm}[Gradient flow structure for the time-discrete model]\label{thm:gradient_flow_tau}
Suppose that Hypotheses~\ref{hyp:tensor_C} and~\ref{hyp:given_funcs} and~\eqref{cond:given_data_indep_of_time} hold.
Let $\{(u_\tau^k,\phi_\tau^k)\in V(g) \times \Psi;\ k=0,\ldots, N_T \}$ be the solution of~\eqref{prob:maxwell_timediscrete_weak}.
Then, the solution satisfies the following for any $k=1,\ldots, N_T$:
\smallskip\\
(i)~Gradient flow structure:
\begin{align}
\bigl( \eta \ol{D}_\tau \phi_\tau^k, \psi \bigr)_\Psi = - (\pz E_\ast) (\phi_\tau^k) [\psi], \quad \forall \psi \in \Psi.
\label{eq:gradient_flow_tau}
\end{align}
(ii)~Energy decay estimate:
\begin{align}
\ol{D}_\tau E_\tau^k + \fz{\alpha \tau}{2} \bigl\|\ol{D}_\tau \phi_\tau^k \bigr\|_\Psi^2 + \fz{\tau}{2} \bigl\| \ol{D}_\tau (e [ u_\tau^k ] - \phi_\tau^k) \bigr\|_C^2 = -\eta \bigl\| \ol{D}_\tau \phi_\tau^k \bigr\|_\Psi^2 \le 0.
\label{eq:energy_decay_tau}
\end{align}
\end{Thm}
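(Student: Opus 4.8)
The plan is to mirror the structure of the continuous proof (Theorem~\ref{thm:gradient_flow_continuous}), replacing the time derivative $\pz_t$ by the backward difference $\ol D_\tau$. First I would establish part~(i). Fix $k\in\{1,\ldots,N_T\}$. From~\eqref{prob:maxwell_timediscrete_weak_eq1}, \eqref{eq:derivative_E_wrt_u} and the characterization of $\ol u$ in Lemma~\ref{lem:derivative_bar_u} (more precisely~\eqref{proof}), we get $(\pz_u E)(u_\tau^k,\phi_\tau^k)[v]=0$ for all $v\in V$, hence $u_\tau^k=\ol u(\phi_\tau^k)$. Then Lemma~\ref{lem:gradient_E_ast} gives $(\pz E_\ast)(\phi_\tau^k)[\psi]=(\alpha\phi_\tau^k-\sigma[u_\tau^k,\phi_\tau^k],\psi)_\Psi$, and~\eqref{prob:maxwell_timediscrete_weak_eq2} rewrites the right-hand side as $(-\eta\,\ol D_\tau\phi_\tau^k,\psi)_\Psi$. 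This yields~\eqref{eq:gradient_flow_tau} with essentially the same one-line computation as in the continuous case; no induction is needed here since $k$ is arbitrary.

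For part~(ii) the continuous identity came from the chain rule $\frac{d}{dt}E(u,\phi)=(\pz_uE)[\pz_t u]+(\pz_\phi E)[\pz_t\phi]$, where the first term vanished. At the discrete level there is no exact chain rule, so the plan is to compute $E_\tau^k-E_\tau^{k-1}$ directly from the definition of $E$ and expand the three pieces. Writing $r_\tau^k\defeq e[u_\tau^k]-\phi_\tau^k$, I would use the polarization-type identity $\|a\|^2-\|b\|^2 = 2(a-b,a)-\|a-b\|^2$ (valid in any inner-product space) on both $\tfrac12\|r_\tau^k\|_C^2$ and $\tfrac{\alpha}{2}\|\phi_\tau^k\|_\Psi^2$, producing the two extra quadratic terms $\tfrac{\tau}{2}\|\ol D_\tau r_\tau^k\|_C^2$ and $\tfrac{\alpha\tau}{2}\|\ol D_\tau\phi_\tau^k\|_\Psi^2$ (after dividing by $\tau$) together with the leading linear terms $(e[\ol D_\tau u_\tau^k]-\ol D_\tau\phi_\tau^k,\, r_\tau^k)_C + \alpha(\ol D_\tau\phi_\tau^k,\phi_\tau^k)_\Psi$; the term $-\ell(u_\tau^k)$ contributes $-\ell(\ol D_\tau u_\tau^k)$. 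Collecting the linear terms, the coefficient of $e[\ol D_\tau u_\tau^k]$ is $(Cr_\tau^k,e[\ol D_\tau u_\tau^k])_\Psi-\ell(\ol D_\tau u_\tau^k)=(\sigma[u_\tau^k,\phi_\tau^k],e[\ol D_\tau u_\tau^k])_\Psi-\ell(\ol D_\tau u_\tau^k)$, which vanishes by~\eqref{prob:maxwell_timediscrete_weak_eq1} since $\ol D_\tau u_\tau^k=(u_\tau^k-u_\tau^{k-1})/\tau\in V$ (both iterates lie in the same affine space $V(g)$ under~\eqref{cond:given_data_indep_of_time}). The remaining linear terms are $(-Cr_\tau^k+\alpha\phi_\tau^k,\ol D_\tau\phi_\tau^k)_\Psi=(\alpha\phi_\tau^k-\sigma[u_\tau^k,\phi_\tau^k],\ol D_\tau\phi_\tau^k)_\Psi=-\eta\|\ol D_\tau\phi_\tau^k\|_\Psi^2$ by~\eqref{prob:maxwell_timediscrete_weak_eq2}. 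Rearranging gives exactly~\eqref{eq:energy_decay_tau}, and the final inequality $\le 0$ is immediate since all three terms on the left beyond $\ol D_\tau E_\tau^k$ are nonnegative.

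The main obstacle is purely bookkeeping: correctly applying the discrete polarization identity with the two different inner products $(\cdot,\cdot)_C$ and $(\cdot,\cdot)_\Psi$ and making sure the "defect" terms land with the right sign and the factor $\tau/2$ (rather than, say, a backward-in-time version that would spoil the sign). The one genuine structural point to be careful about is that $\ol D_\tau u_\tau^k\in V$, which is what lets us test~\eqref{prob:maxwell_timediscrete_weak_eq1} with it and kill the $u$-variation; this uses that $g$ is time-independent, i.e.~\eqref{cond:given_data_indep_of_time}. Everything else is the same algebra as the continuous proof, just with $\ol D_\tau$ in place of $\pz_t$ and the extra nonnegative discretization remainders appearing because the discrete chain rule is only an inequality.
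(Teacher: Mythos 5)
Your proposal is correct and follows essentially the same route as the paper: part~(i) is the identical argument via $u_\tau^k=\ol u(\phi_\tau^k)$, Lemma~\ref{lem:gradient_E_ast} and~\eqref{prob:maxwell_timediscrete_weak_eq2}, and part~(ii) is the same direct expansion of $E_\tau^k-E_\tau^{k-1}$ (the paper uses $a^2-b^2=(a+b)(a-b)$, which is algebraically equivalent to your identity $\|a\|^2-\|b\|^2=2(a-b,a)-\|a-b\|^2$), with the same cancellation of the $u$-variation by testing~\eqref{prob:maxwell_timediscrete_weak_eq1} with $\ol D_\tau u_\tau^k\in V$. Your explicit remark that $\ol D_\tau u_\tau^k\in V$ requires $g$ to be time-independent is a point the paper uses only implicitly.
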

\begin{proof}
Let $k \in \{1,\ldots,N_T\}$ be fixed arbitrarily.
Since $(u_\tau^k, \phi_\tau^k)$ is the solution to~\eqref{prob:maxwell_timediscrete_weak}, we obtain from~\eqref{prob:maxwell_timediscrete_weak_eq1}, \eqref{eq:derivative_E_wrt_u} and \eqref{def:bar_u} that $u_\tau^k = \ol{u}(\phi_\tau^k)$.
Together with Lemma~\ref{lem:gradient_E_ast} and~\eqref{prob:maxwell_timediscrete_weak_eq2} we have
\begin{align}
(\pz E_\ast) (\phi_\tau^k) [\psi]
= (\alpha \phi_\tau^k - \sigma[u_\tau^k, \phi_\tau^k], \psi)_\Psi
= ( -\eta \ol{D}_\tau \phi_\tau^k, \psi )_\Psi
\label{eq:gradient_flow_tau_a}
\end{align}
for any $\psi \in \Psi$.
Hence~\eqref{eq:gradient_flow_tau} holds.
\par
The estimate~\eqref{eq:energy_decay_tau} is proved as follows.
Using the identity $a^2-b^2 = (a+b)(a-b)$, $u_\tau^k - u_\tau^{k-1} = \tau \ol{D}_\tau u_\tau^k$ and $\phi_\tau^k - \phi_\tau^{k-1} = \tau \ol{D}_\tau \phi_\tau^k$, we have
\begin{align}
\ol{D}_\tau E_\tau^k
& = \fz{1}{\tau} \Bigl[ E(u_\tau^k, \phi_\tau^k) - E(u_\tau^{k-1}, \phi_\tau^{k-1}) \Bigr]\notag\\
& = \fz{1}{2} \Bigl( \sigma[u_\tau^k+u_\tau^{k-1}, \phi_\tau^k+\phi_\tau^{k-1}], e[ \ol{D}_\tau u_\tau^k ] - \ol{D}_\tau \phi_\tau^k \Bigr)_\Psi 
+ \fz{\alpha}{2} \Bigl( \phi_\tau^k + \phi_\tau^{k-1}, \ol{D}_\tau \phi_\tau^k \Bigr)_\Psi - \ell ( \ol{D}_\tau u_\tau^k) \notag\\
& = \Bigl( \sigma[u_\tau^k, \phi_\tau^k], e[\ol{D}_\tau u_\tau^k] - \ol{D}_\tau \phi_\tau^k \Bigr)_\Psi - \fz{\tau}{2} \Bigl( \sigma[\ol{D}_\tau u_\tau^k, \ol{D}_\tau \phi_\tau^k ], e[\ol{D}_\tau u_\tau^k] - \ol{D}_\tau \phi_\tau^k \Bigr)_\Psi \notag\\
& \quad + \alpha \Bigl( \phi_\tau^k, \ol{D}_\tau \phi_\tau^k \Bigr)_\Psi - \fz{\alpha\tau}{2} \Bigl( \ol{D}_\tau \phi_\tau^k, \ol{D}_\tau \phi_\tau^k \Bigr)_\Psi - \ell (\ol{D}_\tau u_\tau^k) \notag \\
& = \Bigl( \sigma[u_\tau^k, \phi_\tau^k], e[\ol{D}_\tau u_\tau^k] \Bigr)_\Psi - \ell (\ol{D}_\tau u_\tau^k) + \Bigl( \alpha \phi_\tau^k - \sigma[u_\tau^k, \phi_\tau^k], \ol{D}_\tau \phi_\tau^k \Bigr)_\Psi
- \fz{\tau}{2} \bigl\| e[\ol{D}_\tau u_\tau^k] - \ol{D}_\tau \phi_\tau^k \bigr\|_C^2 - \fz{\alpha\tau}{2} \bigl\| \ol{D}_\tau \phi_\tau^k \bigr\|_\Psi^2 \notag \\
& = - \eta \bigl\| \ol{D}_\tau \phi_\tau^k \bigr\|_\Psi^2 - \fz{\tau}{2} \bigl\| e[\ol{D}_\tau u_\tau^k] - \ol{D}_\tau \phi_\tau^k \bigr\|_C^2 - \fz{\alpha\tau}{2} \bigl\| \ol{D}_\tau \phi_\tau^k \bigr\|_\Psi^2,
\label{eq:energy_proof}
\end{align}
which implies~\eqref{eq:energy_decay_tau}, 
where \eqref{prob:maxwell_timediscrete_weak} with $v = \ol{D}_\tau u_\tau^k \in V$ and $\psi = \ol{D}_\tau \phi_\tau^k \in \Psi$ and~\eqref{eq:gradient_flow_tau_a} have been employed for the last equality in~\eqref{eq:energy_proof}.
\end{proof}
\begin{Cor}[Energy decay estimate for the time-discrete model]
Under the same assumptions of Theorem~\ref{thm:gradient_flow_tau}, it holds that
\begin{align}
E_\tau^k \le E_\tau^{k-1}, \quad \forall k=1,\ldots,N_T.
\label{ieq:energy_decay_tau}
\end{align}
\end{Cor}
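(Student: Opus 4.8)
The plan is to derive the inequality \eqref{ieq:energy_decay_tau} directly from the energy decay estimate \eqref{eq:energy_decay_tau} established in Theorem~\ref{thm:gradient_flow_tau}. The key observation is that the left-hand side of \eqref{eq:energy_decay_tau} contains the term $\ol{D}_\tau E_\tau^k = (E_\tau^k - E_\tau^{k-1})/\tau$ plus two manifestly non-negative terms, namely $\tfrac{\alpha\tau}{2} \|\ol{D}_\tau \phi_\tau^k\|_\Psi^2$ and $\tfrac{\tau}{2} \|\ol{D}_\tau(e[u_\tau^k]-\phi_\tau^k)\|_C^2$ (the latter being non-negative by the positivity of $C$, Hypothesis~\ref{hyp:tensor_C}-(ii), and indeed $\|\cdot\|_C$ is a genuine norm), while the right-hand side is $-\eta\|\ol{D}_\tau\phi_\tau^k\|_\Psi^2 \le 0$.

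First I would fix $k \in \{1,\ldots,N_T\}$ arbitrarily and invoke \eqref{eq:energy_decay_tau}. Then I would rearrange it as
\begin{align*}
\ol{D}_\tau E_\tau^k = -\eta \bigl\|\ol{D}_\tau \phi_\tau^k\bigr\|_\Psi^2 - \fz{\alpha\tau}{2}\bigl\|\ol{D}_\tau \phi_\tau^k\bigr\|_\Psi^2 - \fz{\tau}{2}\bigl\|\ol{D}_\tau(e[u_\tau^k]-\phi_\tau^k)\bigr\|_C^2 \le 0,
\end{align*}
since every term on the right-hand side is non-positive ($\eta > 0$, $\alpha \ge 0$, $\tau > 0$, and both squared norms are non-negative). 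Multiplying through by $\tau > 0$ gives $E_\tau^k - E_\tau^{k-1} \le 0$, i.e., $E_\tau^k \le E_\tau^{k-1}$, which is precisely \eqref{ieq:energy_decay_tau}. Since $k$ was arbitrary in $\{1,\ldots,N_T\}$, the claim follows for all such $k$.

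There is essentially no obstacle here: the corollary is an immediate consequence of the theorem it follows, and the only point requiring a moment's care is noting that $\|\ol{D}_\tau(e[u_\tau^k]-\phi_\tau^k)\|_C^2 \ge 0$, which is guaranteed by the positivity \eqref{ieq:positivity} of the elasticity tensor $C$ that makes $(\cdot,\cdot)_C$ an inner product. One could alternatively telescope \eqref{eq:energy_decay_tau} over $k$ to obtain a sharper statement bounding $E_\tau^m$ in terms of $E_\tau^0$ together with the accumulated dissipation, but the monotone form \eqref{ieq:energy_decay_tau} stated in the corollary requires only the single-step argument above.
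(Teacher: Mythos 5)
Your proposal is correct and follows exactly the paper's own argument: drop the two non-negative terms from the left-hand side of \eqref{eq:energy_decay_tau} to conclude $\ol{D}_\tau E_\tau^k \le -\eta \|\ol{D}_\tau \phi_\tau^k\|_\Psi^2 \le 0$, then multiply by $\tau>0$. No difference in substance from the paper's one-line proof.
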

\begin{proof}
From~\eqref{eq:energy_decay_tau} we have $\ol{D}_\tau E_\tau^k \le - \eta \bigl\|\ol{D}_\tau \phi_\tau^k \bigr\|_\Psi^2 \le 0$
which yields~\eqref{ieq:energy_decay_tau}.
\end{proof}
%
%
%
%
%
\section{A P1/P0 finite element scheme}\label{sec:fem}
%
In this section we present a finite element scheme for the Maxwell model~\eqref{prob:zener_strong} and show a gradient flow structure for the discrete system to be given by~\eqref{scheme}.
\subsection{A finite element scheme with an efficient algorithm}
Let $\mathcal{T}_h=\{K\}$ be a triangulation of $\Omega$, where $h$ is a representative size of the triangular elements, and let $\Omega_h = {\rm int} (\cup_{K\in\mathcal{T}_h} K)$.
For the sake of simplicity, we assume $\Omega=\Omega_h$.
We define finite element spaces~$X_h$ and~$\Psi_h$ by
\begin{align*}
X_h & \defeq \bigr\{ v_h \in C(\ol\Omega; \R^d);\ v_{h|K} \in P_1(K; \R^d), \forall K \in \mathcal{T}_h \bigr\}, \\
\Psi_h & \defeq \bigr\{ \psi_h \in L^2(\ol\Omega; \R^{d\times d}_\sym);\ \psi_{h|K} \in P_0(K; \R^{d\times d}), \forall K \in \mathcal{T}_h \bigr\},
\end{align*}
where $P_1(K; \R^d)$ and $P_0(K; \R^{d\times d})$ are polynomial spaces of vector-valued linear functions and matrix-valued constant functions on $K\in\mathcal{T}_h$, respectively.
For a function~$g_{0h}\in X_h$ we define function spaces $V_h(g_{0h})$ and $V_h$ by $V_h(g_{0h}) \defeq X_h \cap V(g_{0h})$ and $V_h \defeq V_h(0)$, respectively.
\par
Suppose that $\{g_h^k\}_{k=0}^{N_T} \subset X_h$ and $\phi_h^0 \in \Psi_h$ are given, where $g_h^k$ and $\phi_h^0$ are approximations of $g^k$ and $\phi^0$, respectively.
We present a finite element scheme for the Maxwell model~\eqref{prob:zener_strong}; find $\{(u_h^k,\phi_h^k)\in V_h(g_h^k)\times \Psi_h; \ k=0, \ldots, N_T \}$ such that
\begin{subequations}\label{scheme}
\begin{align}
\bigl( \sigma[u_h^k,\phi_h^k], e[v_h] \bigr)_\Psi & = \ell^k(v_h), & \forall v_h \in V_h, \quad & k=0, \ldots, N_T,
\label{scheme_eq1} \\
\eta \ol{D}_\tau \phi_h^k + \alpha \phi_h^k - \sigma[u_h^k,\phi_h^k] & = 0 & \mbox{in $\Psi_h$}, \quad & k=1, \ldots, N_T.
\label{scheme_eq2}
\end{align}
\end{subequations}
\par
Thanks to the choice of P1/P0-finite element for $X_h \times \Psi_h$, we have that $e[u_h^k]_{|K} \in P_0(K; \R^{d\times d})$ for any $K\in\mathcal{T}_h$, and the equation~\eqref{scheme_eq2} can be considered on each $K$.
Similarly to~\eqref{prob:maxwell_timediscrete_weak_eq2_explicit} the equation~\eqref{scheme_eq2} provides an explicit representation of~$\phi_h^k$,
\begin{align}
\phi_h^k = D^{-1} \Bigl( Ce[u_h^k] + \fz{\eta}{\tau} \phi_h^{k-1} \Bigr),
\label{scheme_eq2_explicit}
\end{align}
where $D$ is the tensor defined in~\eqref{def:tensor_D}.
Substituting~\eqref{scheme_eq2_explicit} into~\eqref{scheme_eq1}, we have
\begin{align}
\Bigl( C(I+D^{-1}C) e[u_h^k], e[v_h] \Bigr)_\Psi = \Bigl( \fz{\eta}{\tau} CD^{-1} \phi_h^{k-1}, e[v_h] \Bigr)_\Psi + \ell^k (v_h), \quad \forall v_h \in V_h.
\label{scheme_eq1_new}
\end{align}
Hence, scheme~\eqref{scheme} is realized by the next algorithm for $k \ge 1$, while $u_h^0 \in V_h(g_h^0)$ is obtained from~\eqref{scheme_eq1} with $k=0$.
\begin{Alg}\label{algorithm}
Let a function $\phi_h^{k-1} \in \Psi_h$ be given for some $k\in \{1, \ldots, N_T\}$.
Then, the pair $(u_h^k, \phi_h^k) \in V_h(g_h^k) \times \Psi_h$ is obtained as follows:
\label{AL}
\begin{enumerate}
\item
Find $u_h^k \in V_h(g_h^k)$ by~\eqref{scheme_eq1_new}, which is a symmetric system of linear equations. \smallskip
\item
Find $\phi_h^k \in \Psi_h$ by~\eqref{scheme_eq2_explicit}, where $\phi_h^k$ is determined explicitly on each triangular element.
\end{enumerate}
\end{Alg}
\begin{Rmk}
For given continuous functions~$g \in C(\ol\Omega\times [0,T]; \R^d)$ and~$\phi^0 \in C(\ol\Omega; \R^{d\times d}_\sym)$ we define
\begin{align}
g_h^k \defeq g_h(\cdot, t^k) \defeq \Pi_h^{(1)} g (\cdot, t^k) \in X_h, \qquad \phi_h^0 \defeq \Pi_h^{(0)} \phi^0 \in \Psi_h,
\end{align}
for $k = 0, \ldots, N_T$, where $\Pi_h^{(0)}:~C(\ol\Omega; \R^{d\times d}_\sym) \to \Psi_h$ and $\Pi_h^{(1)}:~C(\ol\Omega; \R^d) \to X_h$ are the {\rm Lagrange} interpolation operators.
\end{Rmk}
\begin{Rmk}
We note that
\begin{align*}
  D^{-1} X =  \fz{1}{\beta_0} \Bigl[ X - \fz{\lambda}{\beta_1} (\mathrm{tr}\,X) I \Bigr],
  \quad \forall X \in \R^{d\times d}_\sym ,
\end{align*}
where $\beta_0 \defeq 2\mu + (\eta/\tau) + \alpha$, $\beta_1 \defeq d\lambda + \beta_0$,
and $I \in \R^{d\times d}$ is the identity matrix.
\end{Rmk}
\begin{Rmk}\label{rmk:high_order_schemes}
(i)~In the case of a conforming pair, e.g.,  P2/P1 element, we have to solve a linear system to determine the function $\phi_h^k$ due to the continuity of $\phi_h^k$.\\
(ii)~A similar algorithm is possible for the pair of continuous P$\ell$ and discontinuous P${\ell^\prime}$ finite element spaces~(P$\ell$/P$\ell^\prime$dc), $\ell \in \N$, $\ell^\prime = \ell-1$, for $u$ and $\phi$, respectively.\\
(iii)~Since the locking phenomena often happen for P1-FEM in the displacement formulation, the stress formulation is often used, for example~\cite{RogWin-2010}.
But the locking problem can be avoided by using P2/P1dc element and/or adaptive mesh refinement technique~\cite{ALBERTA-2005}, and the gradient structures of our continuous and discrete models are huge advantages of the displacement formulation.
\end{Rmk}
\par
The next theorem shows on the existence and uniqueness of the solutions to~\eqref{scheme}.
%
%
\begin{Thm}[Existence and uniqueness for the finite element scheme]\label{thm:solvability_fem}
Suppose that Hypotheses~\ref{hyp:tensor_C} and~\ref{hyp:given_funcs} hold and that $\{g_h^k\}_{k=1}^{N_T} \subset X_h$ and $\phi_h^0 \in \Psi_h$ are given.
Then, there exists a unique solution~$\{(u_h^k,\phi_h^k)\in V_h(g_h^k) \times \Psi_h;\ k=0,\ldots, N_T \}$ to~\eqref{scheme}.
\end{Thm}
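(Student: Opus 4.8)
The plan is to mimic, on the finite-dimensional spaces $V_h(g_h^k) \times \Psi_h$, the argument already used in the proof of Theorem~\ref{thm:solvability_tau}. The key observation is that the only structural facts exploited there — the positivity of $C$ (Hypothesis~\ref{hyp:tensor_C}-(ii)), the resulting invertibility of the fourth-order tensor $D$ defined in~\eqref{def:tensor_D}, and the coercivity of the bilinear form associated with the modified elasticity tensor $C(I+D^{-1}C)$ — carry over verbatim to the conforming subspace setting. First I would handle $k=0$: since $\phi_h^0$ is given, \eqref{scheme_eq1} with $k=0$ is exactly a Galerkin discretization of a linear elasticity problem on $V_h(g_h^0)$, whose bilinear form $(Ce[\cdot],e[\cdot])_\Psi$ is coercive on $V_h \subset V$ by~\eqref{ieq:positivity} together with Korn's inequality (valid on $V$ because $|\Gamma_0|_{d-1}>0$), so Lax--Milgram — or, in finite dimensions, simple nonsingularity of the stiffness matrix — gives a unique $u_h^0 \in V_h(g_h^0)$, and then $\phi_h^0$ is the prescribed datum.

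Next I would proceed by induction on $k \in \{1,\dots,N_T\}$, assuming $\phi_h^{k-1} \in \Psi_h$ is known. The essential point, already isolated in the text preceding Algorithm~\ref{algorithm}, is that because $e[u_h^k]_{|K} \in P_0(K;\R^{d\times d})$, the algebraic relation~\eqref{scheme_eq2_explicit} is an \emph{exact} consequence of~\eqref{scheme_eq2} element by element and produces a genuine element of $\Psi_h$ (constant on each $K$); so \eqref{scheme} for index $k$ is equivalent to the single equation~\eqref{scheme_eq1_new} for $u_h^k \in V_h(g_h^k)$, with $\phi_h^k$ then defined by~\eqref{scheme_eq2_explicit}. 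Thus it suffices to show~\eqref{scheme_eq1_new} has a unique solution. For that I invoke that $C(I+D^{-1}C)$ is a positive elasticity tensor — exactly the remark made in the proof of Theorem~\ref{thm:solvability_tau} after~\eqref{prob:maxwell_timediscrete_weak_eq1_without_phi_k} (one can see $I + D^{-1}C$ maps $\R^{d\times d}_\sym$ into itself and is positive definite in the $C$-inner product since $D^{-1}C$ is, and composing with the positive $C$ keeps positivity on the symmetric matrices that arise as $e[v_h]$) — so the bilinear form $v_h \mapsto (C(I+D^{-1}C)e[u_h^k],e[v_h])_\Psi$ is coercive on $V_h$, again by Korn, and Lax--Milgram yields a unique $u_h^k \in V_h(g_h^k)$.

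For uniqueness of the whole system I would argue exactly as in Theorem~\ref{thm:solvability_tau}: by linearity, reduce to $f^k=q^k=0$, $g_h^k=0$, $\phi_h^{k-1}=0$, note that $u_h^0=0$ and then, inductively, \eqref{scheme_eq1_new} with vanishing right-hand side forces $u_h^k=0$, whence \eqref{scheme_eq2_explicit} gives $\phi_h^k=0$. I do not expect any genuine obstacle here: the statement is a routine finite-element transcription of the time-discrete existence result, and the only point needing a word of care is the positivity of the modified tensor $C(I+D^{-1}C)$ on the relevant subspace $\R^{d\times d}_\sym$ (equivalently, the coercivity of~\eqref{scheme_eq1_new}), which is inherited directly from Hypothesis~\ref{hyp:tensor_C}-(ii) and the definition~\eqref{def:tensor_D} of $D$. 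One should also note that, since all spaces are finite-dimensional, "Lax--Milgram" may be replaced throughout by the observation that a coercive bilinear form has nonsingular matrix.
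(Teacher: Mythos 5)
Your proposal is correct and follows essentially the same route as the paper: solve $k=0$ by Lax--Milgram, then for $k\ge 1$ eliminate $\phi_h^k$ elementwise via~\eqref{scheme_eq2_explicit}, solve the reduced problem~\eqref{scheme_eq1_new} with the positive tensor $C(I+D^{-1}C)$ by Lax--Milgram, and obtain uniqueness by linearity as in Theorem~\ref{thm:solvability_tau}. The only difference is that you spell out details (Korn's inequality, positivity of the modified tensor, the finite-dimensional reading of Lax--Milgram) that the paper leaves implicit by referring back to the time-discrete case.
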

%
%
\begin{proof}
We show the existence of a solution to~\eqref{scheme}.
The proof is similar to that of Theorem~\ref{thm:solvability_tau}.
Since $\phi_h^0 \in \Psi_h$ is known, there exists a unique solution~$(u_h^0, \phi_h^0) \in V_h(g_h^0) \times \Psi_h$ of~\eqref{scheme_eq1} with $k=0$ from the Lax--Milgram Theorem.
\par
Let $k \in \{1,\ldots, N_T\}$ be fixed arbitrarily and $\phi_h^{k-1} \in \Psi_h$ be given.
From~\eqref{scheme_eq2} we obtain~\eqref{scheme_eq2_explicit} and~\eqref{scheme_eq1_new}, which imply the existence of $(u_h^k,\phi_h^k)\in V_h(g_h^k) \times \Psi_h$ from the Lax--Milgram Theorem.
We omit the proof of the uniqueness, since it is similar to that of Theorem~\ref{thm:solvability_tau}.
\end{proof}
\subsection{Gradient flow structure and energy decay estimate for the finite element scheme}
\label{subsec:fem_energy}
We assume Hypothesis~\ref{hyp:given_funcs} in the rest of this section.
Let $E_h^k \defeq E (u_h^k, \phi_h^k)$ for the finite element solution $\{(u_h^k,\phi_h^k)\}_{k=1}^{N_T}$ of~\eqref{scheme}.
For $\phi_h \in \Psi_h$ we also define an energy $E_{h \ast}: \Psi_h \to \R$ and its (G\^ateaux) derivative $(\pz E_{h \ast}) (\phi_h) = (\pz E_{h \ast}) (\phi_h) [\cdot]: \Psi_h \to \R$ by
\begin{align*}
E_{h \ast} (\psi_h) & \defeq \min_{v_h \in V_h(g_h)} E (v_h, \psi_h) = E (\ol{u}_h(\psi_h), \psi_h), && \psi_h \in \Psi_h, \\
(\pz E_{h \ast}) (\phi_h) [\psi_h] & \defeq \fz{d}{d\varepsilon} E_{h \ast} (\phi_h+\varepsilon\psi_h)_{|\varepsilon = 0}, && \psi_h \in \Psi_h.
\end{align*}
where $\ol{u}_h(\psi_h) \in V_h(g_h)$ is the minimizer of~$E (v, \psi)$ defined by
\begin{align}
\ol{u}_h(\psi_h) \defeq \argmin_{v_h \in V_h(g_h)} E (v_h, \psi_h).
\label{def:bar_u_h}
\end{align}
\par
We present a gradient flow structure and an energy decay estimate for the scheme in~\eqref{scheme} which is the discrete counterpart of~\eqref{eq:gradient_flow_tau} and~\eqref{eq:energy_decay_tau} in Theorem~\ref{thm:gradient_flow_tau}.
\begin{Thm}[Gradient flow structure for the finite element scheme]\label{thm:gradient_flow_fem}
Suppose that Hypotheses~\ref{hyp:tensor_C} and~\ref{hyp:given_funcs} and~\eqref{cond:given_data_indep_of_time} hold and that $\{g_h^k\}_{k=0}^{N_T} \subset X_h$ and $\phi_h^0 \in \Psi_h$ are given with $g_h^k$ independent of $k$.
Let $\{(u_h^k,\phi_h^k)\in V_h(g_h) \times \Psi;\ k=0,\ldots, N_T \}$ be the solution of~\eqref{scheme}.
Then, the solution satisfies the following for any $k=1,\ldots, N_T$:
\smallskip\\
(i)~Gradient flow structure:
\begin{align}
\bigl( \eta \ol{D}_\tau \phi_h^k, \psi_h \bigr)_\Psi = - (\pz E_{h \ast}) (\phi_h^k) [\psi_h], \quad \forall \psi_h \in \Psi_h.
\label{eq:gradient_flow_fem}
\end{align}
(ii)~Energy decay estimate:
\begin{align}
\ol{D}_\tau E_h^k + \fz{\alpha \tau}{2} \bigl\|\ol{D}_\tau \phi_h^k \bigr\|_\Psi^2 + \fz{\tau}{2} \bigl\| \ol{D}_\tau (e [ u_h^k ] - \phi_h^k) \bigr\|_C^2 = -\eta \bigl\| \ol{D}_\tau \phi_h^k \bigr\|_\Psi^2 \le 0.
\label{eq:energy_decay_fem}
\end{align}
\end{Thm}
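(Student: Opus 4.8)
The plan is to follow exactly the structure of the proof of Theorem~\ref{thm:gradient_flow_tau}, merely replacing the spaces $V(g)$ and $\Psi$ by their finite element counterparts $V_h(g_h)$ and $\Psi_h$ throughout. The key observation is that all the ingredients used in the continuous-in-space case are available verbatim at the discrete level: the scheme~\eqref{scheme} has the same algebraic shape as~\eqref{prob:maxwell_timediscrete_weak}, and the minimization argument behind $\ol{u}(\phi)$ carries over to $\ol{u}_h(\phi_h)$ because the Lax--Milgram argument in Lemma~\ref{lem:derivative_bar_u} applies equally well on $V_h$ (the bilinear form $(e[\cdot],e[\cdot])_C$ is still coercive on $V_h$ by Korn's inequality and the positivity~\eqref{ieq:positivity} of $C$).

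First I would establish the discrete analogue of Lemma~\ref{lem:gradient_E_ast}, namely that for $\phi_h,\psi_h\in\Psi_h$,
\begin{align*}
(\pz E_{h\ast})(\phi_h)[\psi_h] = (\alpha\phi_h - \sigma[\ol{u}_h(\phi_h),\phi_h],\psi_h)_\Psi.
\end{align*}
This follows by the same chain-rule computation as in Lemma~\ref{lem:gradient_E_ast}: differentiating $E_{h\ast}(\phi_h+\varepsilon\psi_h)=E(\ol{u}_h(\phi_h+\varepsilon\psi_h),\phi_h+\varepsilon\psi_h)$ and using that the partial derivative $(\pz_u E)(\ol{u}_h(\psi_h),\psi_h)[v_h]=0$ for all $v_h\in V_h$ by the definition~\eqref{def:bar_u_h} of the minimizer and~\eqref{prob:maxwell_weak_eq1}/\eqref{eq:derivative_E_wrt_u}. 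Then, since the finite element solution satisfies~\eqref{scheme_eq1}, which is precisely the optimality condition characterizing $\ol{u}_h$, we get $u_h^k = \ol{u}_h(\phi_h^k)$; combining this with the discrete Lemma~\ref{lem:gradient_E_ast} and~\eqref{scheme_eq2} gives
\begin{align*}
(\pz E_{h\ast})(\phi_h^k)[\psi_h] = (\alpha\phi_h^k - \sigma[u_h^k,\phi_h^k],\psi_h)_\Psi = (-\eta\ol{D}_\tau\phi_h^k,\psi_h)_\Psi
\end{align*}
for all $\psi_h\in\Psi_h$, which is~\eqref{eq:gradient_flow_fem}.

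For part (ii), I would repeat the telescoping identity~\eqref{eq:energy_proof} of Theorem~\ref{thm:gradient_flow_tau} with $u_\tau^k,\phi_\tau^k$ replaced by $u_h^k,\phi_h^k$. The same algebra using $a^2-b^2=(a+b)(a-b)$, the identities $u_h^k-u_h^{k-1}=\tau\ol{D}_\tau u_h^k$ and $\phi_h^k-\phi_h^{k-1}=\tau\ol{D}_\tau\phi_h^k$, and then testing~\eqref{scheme_eq1} with $v_h=\ol{D}_\tau u_h^k\in V_h$ (legitimate because $g_h^k$ is independent of $k$, so $\ol{D}_\tau u_h^k$ has zero trace on $\Gamma_0$) together with the just-proved relation $\alpha\phi_h^k-\sigma[u_h^k,\phi_h^k]=-\eta\ol{D}_\tau\phi_h^k$ in $\Psi_h$ yields~\eqref{eq:energy_decay_fem}.

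The only point requiring any care --- and the one I would flag as the main (minor) obstacle --- is verifying that $u_h^k=\ol{u}_h(\phi_h^k)$, i.e., that the first equation of the scheme is genuinely the Euler--Lagrange equation of the constrained minimization defining $\ol{u}_h$ over $V_h(g_h)$. This requires that the test space in~\eqref{scheme_eq1} be exactly $V_h$ and that $\ol{u}_h(\phi_h^k)\in V_h(g_h)$, which holds under the stated hypothesis that $g_h^k$ is independent of $k$; the discrete Lax--Milgram/coercivity argument guaranteeing well-posedness of $\ol{u}_h$ is otherwise identical to the continuous one and can be cited from Lemma~\ref{lem:derivative_bar_u}. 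Everything else is a line-by-line transcription.
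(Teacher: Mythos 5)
Your proposal is correct and follows essentially the same route as the paper: the authors likewise identify $u_h^k=\ol{u}_h(\phi_h^k)$ from~\eqref{scheme_eq1}, carry out the chain-rule computation of Lemma~\ref{lem:gradient_E_ast} at the discrete level to get $(\pz E_{h\ast})(\phi_h^k)[\psi_h]=(\alpha\phi_h^k-\sigma[u_h^k,\phi_h^k],\psi_h)_\Psi=(-\eta\ol{D}_\tau\phi_h^k,\psi_h)_\Psi$, and then repeat the telescoping identity~\eqref{eq:energy_proof} with $v_h=\ol{D}_\tau u_h^k$ and $\psi_h=\ol{D}_\tau\phi_h^k$. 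The point you flag about $g_h^k$ being independent of $k$ so that $\ol{D}_\tau u_h^k\in V_h$ is exactly the hypothesis the theorem imposes for this purpose.
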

\begin{proof}
The proof is similar to that of Theorem~\ref{thm:gradient_flow_tau}.
Let $k \in \{1,\ldots,N_T\}$ be fixed arbitrarily.
Since $(u_h^k, \phi_h^k)$ is the solution to~\eqref{scheme}, we obtain from~\eqref{scheme_eq1}, \eqref{eq:derivative_E_wrt_u} and \eqref{def:bar_u} that $u_h^k = \ol{u}_h(\phi_h^k)$.
Together with Lemma~\ref{lem:gradient_E_ast} and~\eqref{scheme_eq2} we have
\begin{align}
& (\pz E_{h \ast}) (\phi_h^k) [\psi_h] = \fz{d}{d\varepsilon} E_{h \ast} (\phi_h^k + \epsilon \psi_h)_{|\varepsilon = 0} = \fz{d}{d\varepsilon} E(\ol{u}_h (\phi_h^k + \epsilon \psi_h), \phi_h^k + \epsilon \psi_h)_{|\varepsilon = 0} \notag\\
& = (\pz_\phi E) (\ol{u}_h (\phi_h^k), \phi_h^k) [\psi_h] 
= (\alpha \phi_h^k - \sigma[u_h^k, \phi_h^k], \psi_h)_\Psi
= ( -\eta \ol{D}_\tau \phi_h^k, \psi_h )_\Psi
\label{eq:gradient_flow_fem_a}
\end{align}
for any $\psi_h \in \Psi_h$.
Hence~\eqref{eq:gradient_flow_fem} holds.
\par
From \eqref{scheme} with $v_h = \ol{D}_\tau u_h^k \in V_h$ and $\psi_h = \ol{D}_\tau \phi_h^k \in \Psi_h$ and~\eqref{eq:gradient_flow_fem_a}, we have
\begin{align}
\ol{D}_\tau E_h^k
& = \fz{1}{\tau} \Bigl[ E(u_h^k, \phi_h^k) - E(u_h^{k-1}, \phi_h^{k-1}) \Bigr]\notag\\
& = \Bigl( \sigma[u_h^k, \phi_h^k], e[\ol{D}_\tau u_h^k] \Bigr)_\Psi - \ell (\ol{D}_\tau u_h^k) + \Bigl( \alpha \phi_h^k - \sigma[u_h^k, \phi_h^k], \ol{D}_\tau \phi_h^k \Bigr)_\Psi 
- \fz{\tau}{2} \bigl\| e[\ol{D}_\tau u_h^k] - \ol{D}_\tau \phi_h^k \bigr\|_C^2 - \fz{\alpha\tau}{2} \bigl\| \ol{D}_\tau \phi_h^k \bigr\|_\Psi^2 \quad \mbox{(cf. \eqref{eq:energy_proof})} \notag \\
& = - \eta \bigl\| \ol{D}_\tau \phi_h^k \bigr\|_\Psi^2 - \fz{\tau}{2} \bigl\| e[\ol{D}_\tau u_h^k] - \ol{D}_\tau \phi_h^k \bigr\|_C^2 - \fz{\alpha\tau}{2} \bigl\| \ol{D}_\tau \phi_h^k \bigr\|_\Psi^2, \notag
\end{align}
which implies~\eqref{eq:energy_decay_fem}.
\end{proof}
\begin{Cor}[Energy decay estimate for the finite element scheme]
Under the same assumptions in Theorem~\ref{thm:gradient_flow_fem} it holds that
\begin{align}
E_h^k \le E_h^{k-1}, \quad \forall k=1,\ldots,N_T.
\label{ieq:energy_decay_fem}
\end{align}
\end{Cor}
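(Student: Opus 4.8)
The plan is to derive \eqref{ieq:energy_decay_fem} directly from the energy decay estimate \eqref{eq:energy_decay_fem} of Theorem~\ref{thm:gradient_flow_fem}, in exactly the same way the Corollary after Theorem~\ref{thm:gradient_flow_tau} follows from \eqref{eq:energy_decay_tau}. Since the assumptions here are precisely those of Theorem~\ref{thm:gradient_flow_fem}, estimate \eqref{eq:energy_decay_fem} is available for every $k = 1,\ldots,N_T$. First I would drop the two non-negative terms $\frac{\alpha\tau}{2}\|\ol{D}_\tau\phi_h^k\|_\Psi^2$ and $\frac{\tau}{2}\|\ol{D}_\tau(e[u_h^k]-\phi_h^k)\|_C^2$ from the left-hand side of \eqref{eq:energy_decay_fem} (they are non-negative because $\alpha \ge 0$, $\tau > 0$, and $\|\cdot\|_C$ is a genuine norm by the positivity of $C$, Hypothesis~\ref{hyp:tensor_C}-(ii)), obtaining $\ol{D}_\tau E_h^k \le -\eta\|\ol{D}_\tau\phi_h^k\|_\Psi^2 \le 0$. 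Then, recalling that $\ol{D}_\tau E_h^k = (E_h^k - E_h^{k-1})/\tau$ and $\tau > 0$, multiplying through by $\tau$ gives $E_h^k - E_h^{k-1} \le 0$, i.e.\ \eqref{ieq:energy_decay_fem}. There is no real obstacle: the content is entirely in Theorem~\ref{thm:gradient_flow_fem}, and the Corollary is the trivial monotonicity consequence. A one-line proof will suffice.

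\begin{proof}
From \eqref{eq:energy_decay_fem}, since the terms $\frac{\alpha\tau}{2}\|\ol{D}_\tau\phi_h^k\|_\Psi^2$ and $\frac{\tau}{2}\|\ol{D}_\tau(e[u_h^k]-\phi_h^k)\|_C^2$ are non-negative, we have $\ol{D}_\tau E_h^k \le -\eta\|\ol{D}_\tau\phi_h^k\|_\Psi^2 \le 0$, which yields \eqref{ieq:energy_decay_fem}.
\end{proof}
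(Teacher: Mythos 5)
Your proof is correct and is essentially identical to the paper's own one-line argument: drop the non-negative terms in \eqref{eq:energy_decay_fem} to get $\ol{D}_\tau E_h^k \le -\eta\|\ol{D}_\tau\phi_h^k\|_\Psi^2 \le 0$, then multiply by $\tau>0$. The only difference is that you spell out explicitly why the discarded terms are non-negative, which the paper leaves implicit.
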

\begin{proof}
From~\eqref{eq:energy_decay_fem} we have $\ol{D}_\tau E_h^k \le - \eta \bigl\|\ol{D}_\tau \phi_h^k \bigr\|_\Psi^2 \le 0$, which implies~\eqref{ieq:energy_decay_fem}.
\end{proof}
%
%
%
%
%
%
\section{Numerical results}\label{sec:numerics}
%
In this section numerical results in 2D for two examples below are presented, where we set
\[
\Omega = (0, 1)^2, \quad \lambda = \mu = \eta = 1, \quad q=0, \quad \phi^0 = 0.
\]
To observe the effect of the relaxation parameter~$\alpha$ we use three values of $\alpha$,
\[
\alpha=0, 1, 2.
\]
The examples are solved by the scheme in~\eqref{scheme} with $\tau = 0.01$ and a non-uniform mesh generated by FreeFem++~\cite{FreeFem} as shown in Fig.~\ref{fig:mesh}, where the division number of each side of the domain is~$40$, i.e., $h = 1/40$.
The total number of elements is $3,794$ and the total number of nodes is $1,978$.
\begin{figure}[!htbp]
\centering
\includegraphics[bb=100 0 496 400,scale=.3]{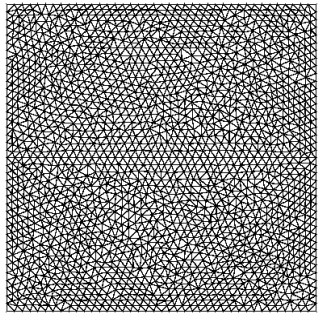}
\caption{The mesh used in the computation.}
\label{fig:mesh}
\end{figure}
\begin{Ex}\label{ex:1}
Let $\Gamma_0=\{x \in \pz\Omega;\ x_1\in (0,1),\ x_2=1 \}$, 
$T=1$, $f=(0,-1)^T$, $g=(0, 0)^T$.
\end{Ex}
\begin{Ex}\label{ex:2}
Let $\Gamma_0=\{x \in \pz\Omega;\ x_1 = 0, 1,\ x_2 \in (0, 1) \}$, 
$T=2$, $f=(0, 0)^T$, $g=(x_1, 0)^T$.
\end{Ex}
\par
The first example is solved in order to see a typical viscoelastic phenomenon, {\it creep}, where $f=(0, -1)^T$ corresponds to the gravity force acting on the viscoelastic body, and the top lid is fixed.
Fig.~\ref{fig:time_evolution_ex1} shows a time evolution of the shape of the material for $\alpha = 0$~(left), $1$~(center) and~$2$~(right), and Fig.~\ref{fig:energy_ex1} illustrates the energy as a function of time for the three values of $\alpha$.
We observe that the square domain has been expanded gradually depending on the value of $\alpha$ and that the energy decay property (see Theorem~\ref{thm:gradient_flow_fem}) is realized numerically.
It is well known that the creep behavior of real materials cannot be predicted
by the pure Maxwell model~($\alpha=0$). As shown in the left column of Fig.~\ref{fig:time_evolution_ex1}
and  a solid line in Fig.~\ref{fig:energy_ex1},
the displacement and the strain increases 
and the elastic energy decreases both almost linearly in time.
However, most of the viscoelastic materials such as polymers behave
not linearly under constant load but have some certain bounds of the
displacement and the elastic energy as shown in the cases $\alpha=1,~2$.
\par
We solve the second example to observe another typical viscoelastic phenomenon known as {\it stress relaxation.}
We test for different values of~$\alpha$.
Here we simply impose $u=g$ on $\ol\Gamma_0$ for $t > 0$, while $\phi = 0$ at $t=0$.
Similarly to the case of Example~\ref{ex:1}, Fig.~\ref{fig:time_evolution_ex2} shows a time evolution of the shape of the material for $\alpha = 0$~(left), $1$~(center) and~$2$~(right), and Fig.~\ref{fig:energy_ex2} illustrates the energy as a function of time for the three values of $\alpha$.
In the case of $\alpha = 0$, the shapes of top and bottom lids of the deformed domain are almost flat at $t=2$.
On the other hand, in the case of $\alpha = 1$ and~$2$, we can see the curved top and bottom lids at $t=2$, which  are the effect of relaxation parameter~$\alpha$.
Fig.~\ref{fig:stress_relax} shows $\|\sigma_{11} [u_h^k,\phi_h^k] \|_{L^\infty(\Omega)}~(k=1, \ldots, N_T)$ as a function of time for $\alpha = 0$, $1$ and~$2$, where the stress relaxation with respect to time~$t$ is observed.
We observe that, in the case of $\alpha = 0$, the stress $\|\sigma_{11}(t)\|_{L^\infty(\Omega)}$ goes to zero as $t$ increases, and that, in the cases of $\alpha = 1$ and~$2$, it goes approximately to $0.73$ and~$1.15$, respectively, which are the effect of the relaxation parameter~$\alpha$.
\begin{figure}[!htbp]
\centering
(a0)~\includegraphics[bb=100 50 360 302,scale=.36]{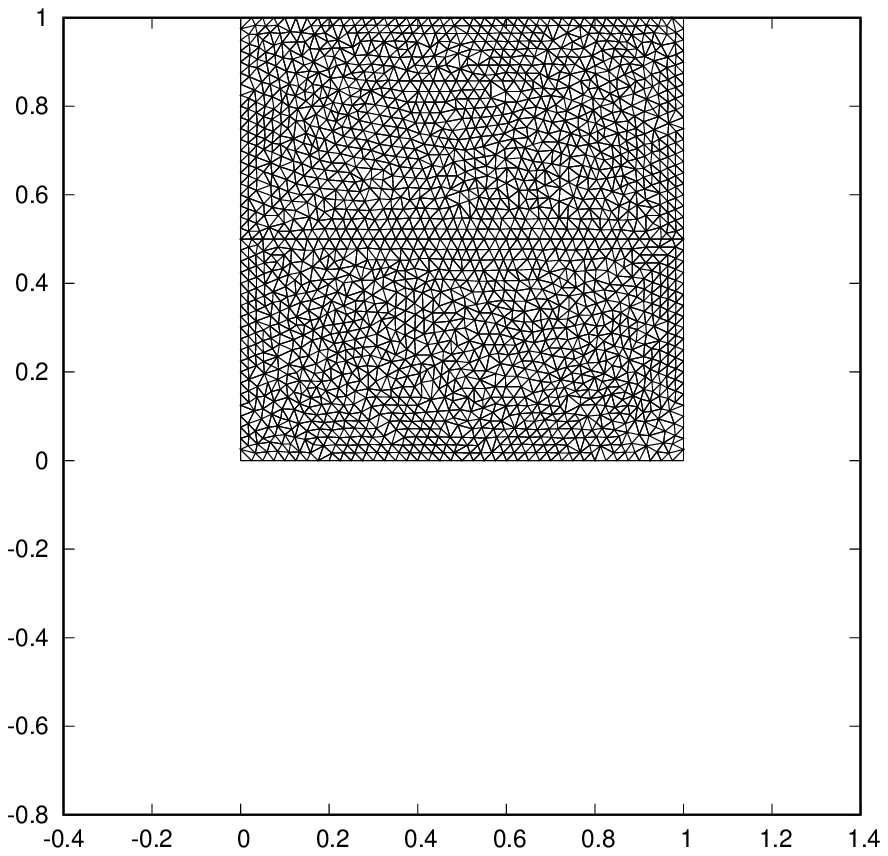}
(b0)~\includegraphics[bb=100 50 360 302,scale=.36]{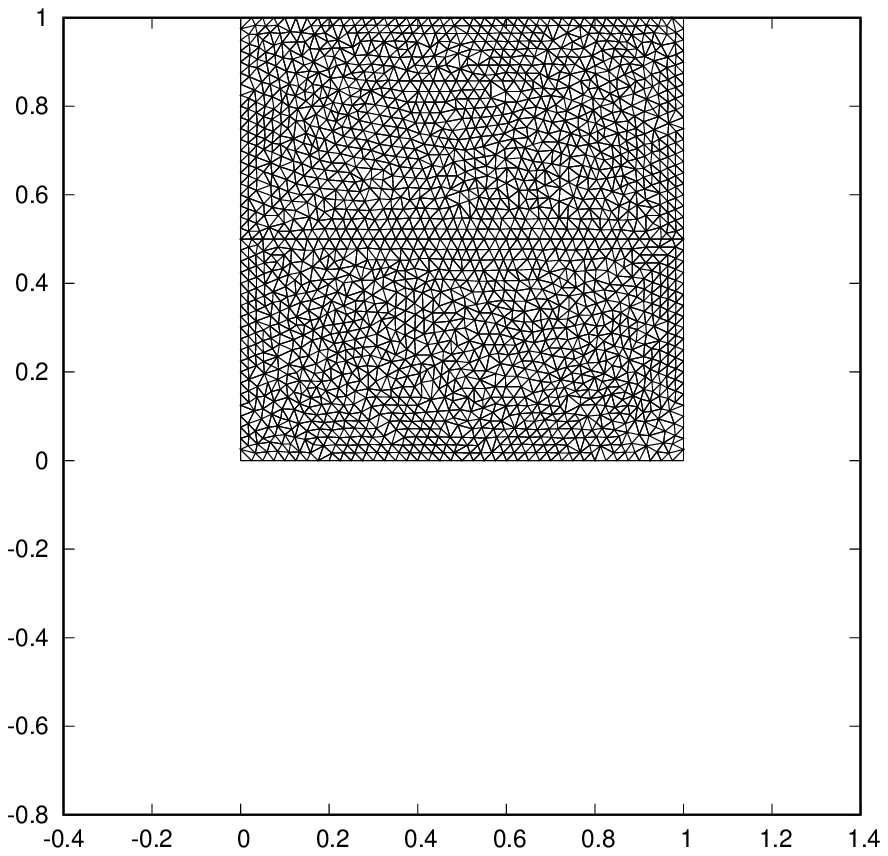}
(c0)~\includegraphics[bb=100 50 360 302,scale=.36]{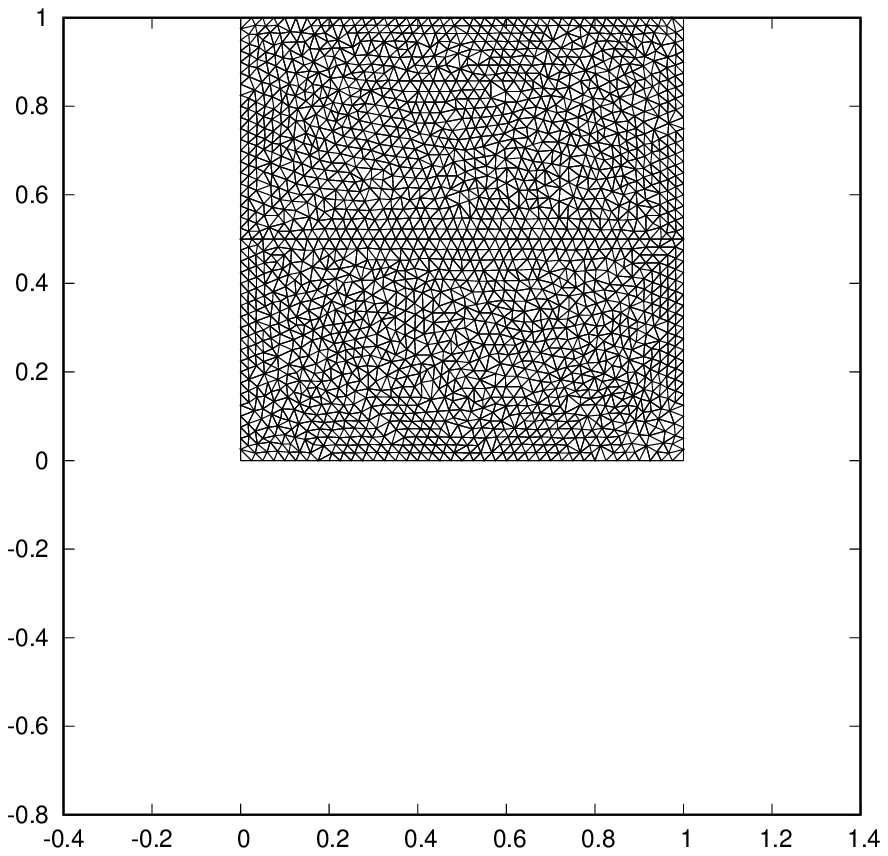}
\bigskip\\
(a1)~\includegraphics[bb=100 50 360 302,scale=.36]{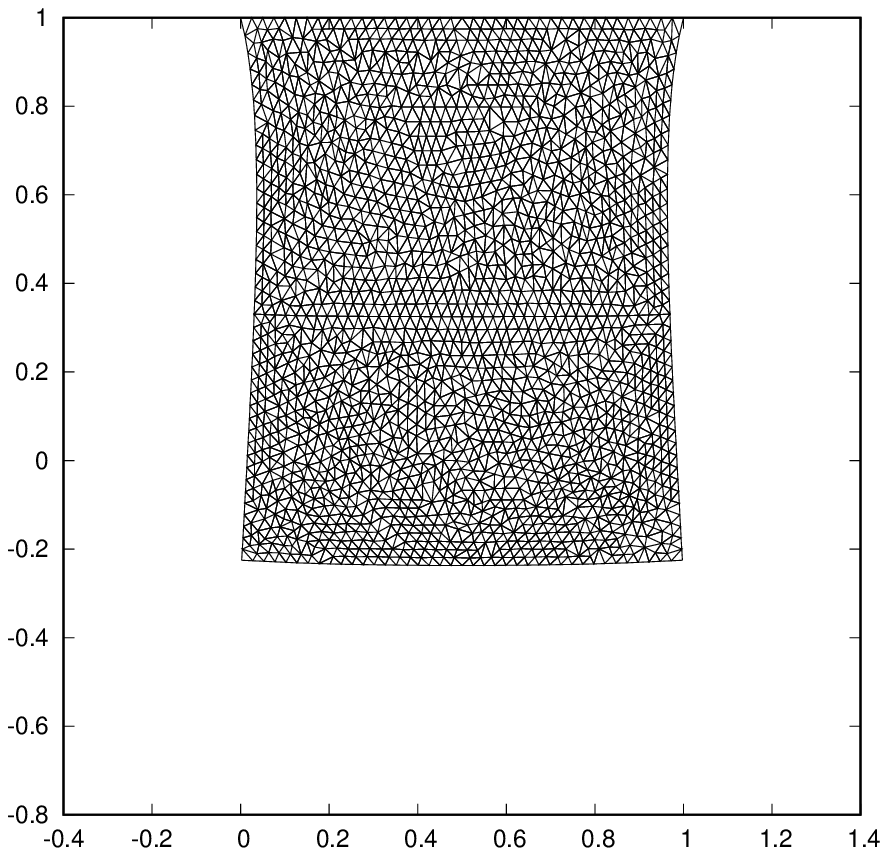}
(b1)~\includegraphics[bb=100 50 360 302,scale=.36]{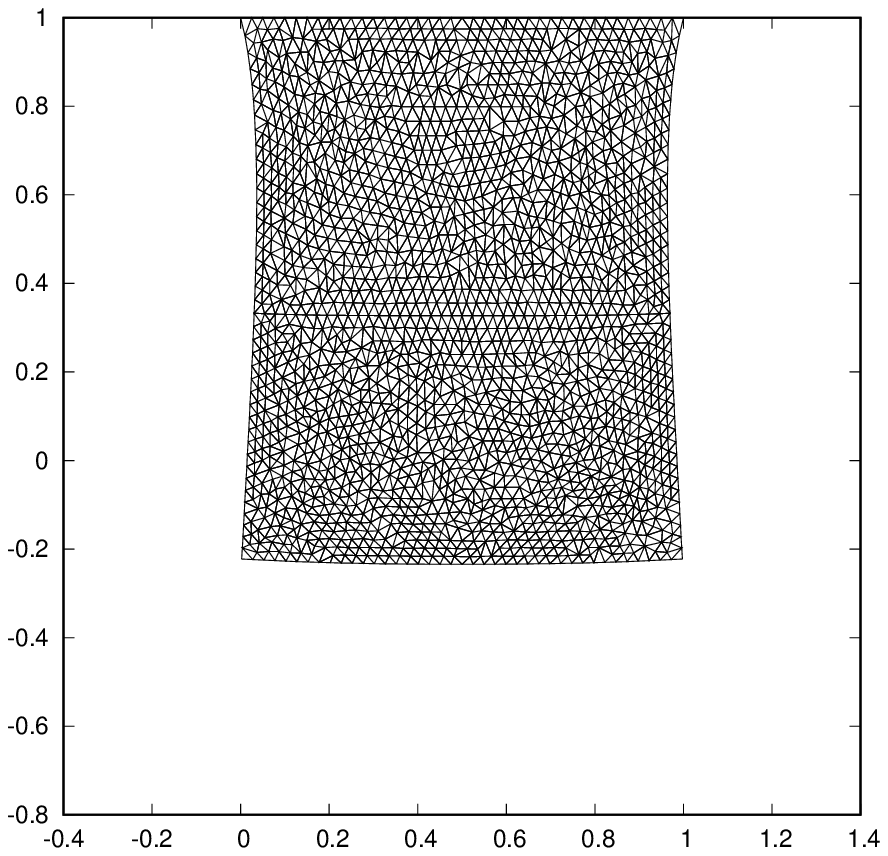}
(c1)~\includegraphics[bb=100 50 360 302,scale=.36]{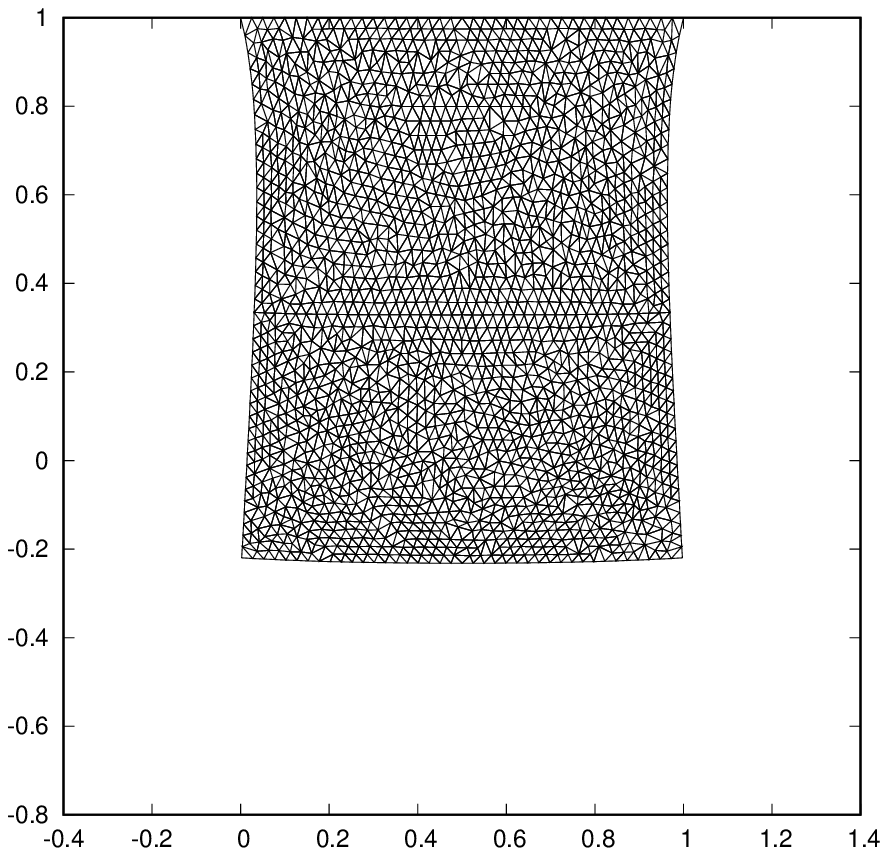}
\bigskip\\
(a2)~\includegraphics[bb=100 50 360 302,scale=.36]{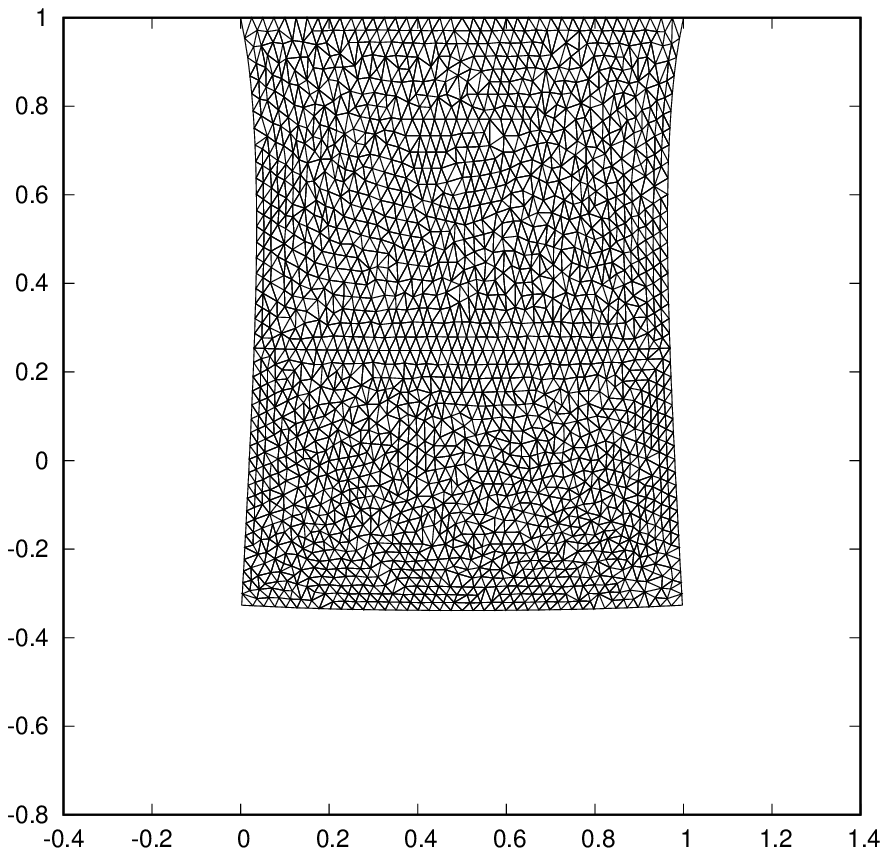}
(b2)~\includegraphics[bb=100 50 360 302,scale=.36]{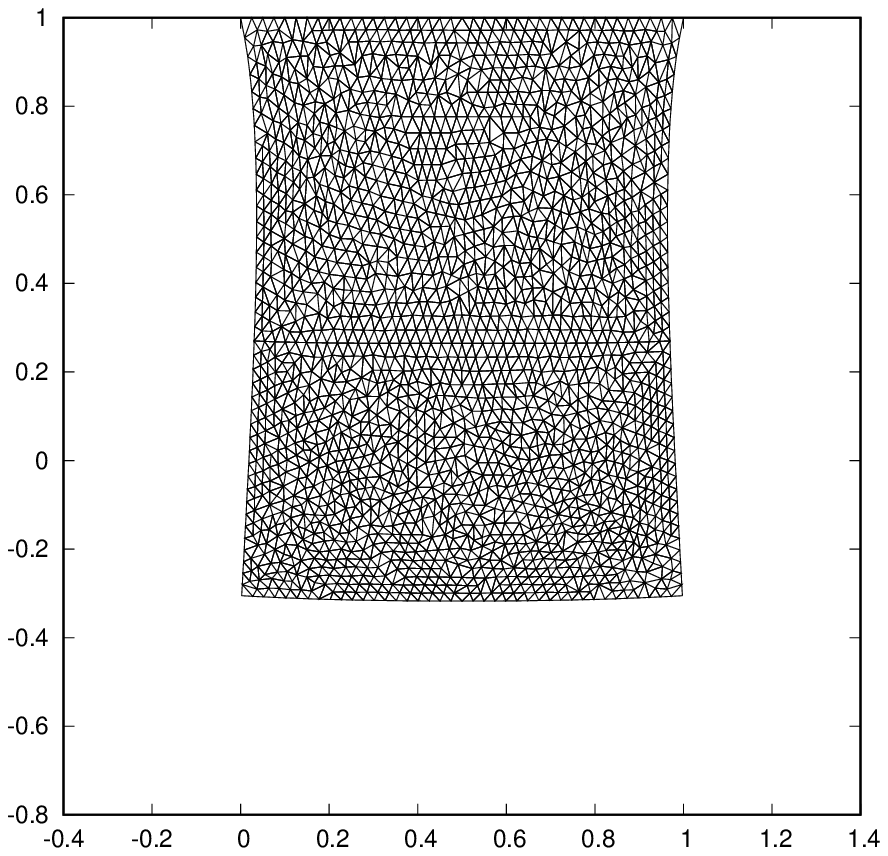}
(c2)~\includegraphics[bb=100 50 360 302,scale=.36]{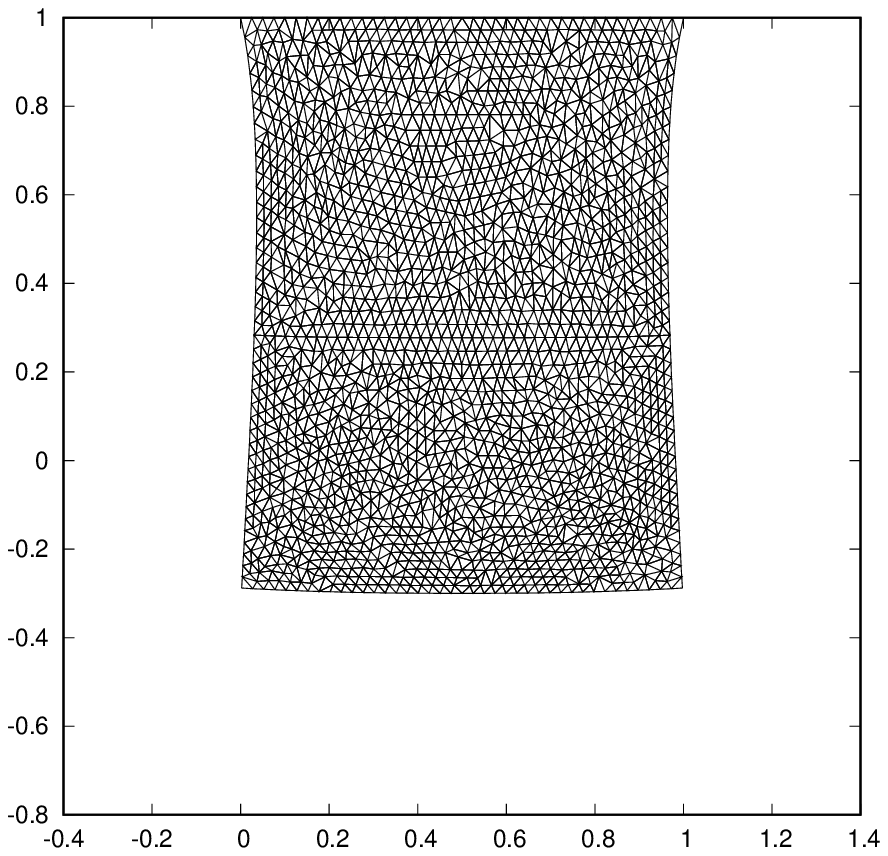}
\bigskip\\
(a3)~\includegraphics[bb=100 50 360 302,scale=.36]{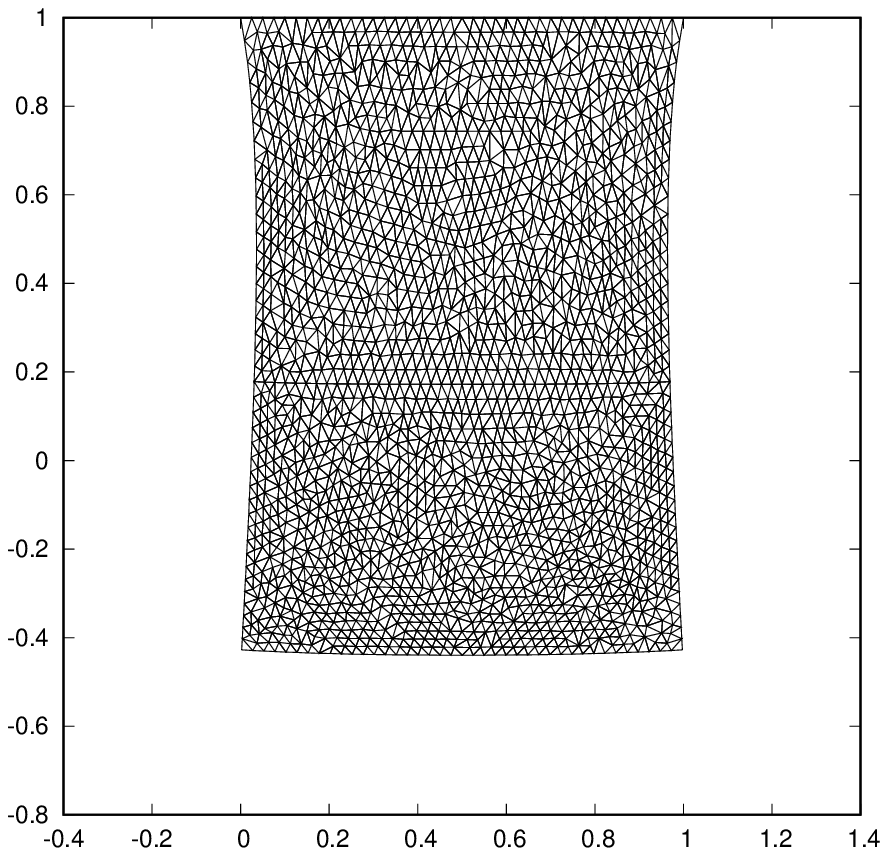}
(b3)~\includegraphics[bb=100 50 360 302,scale=.36]{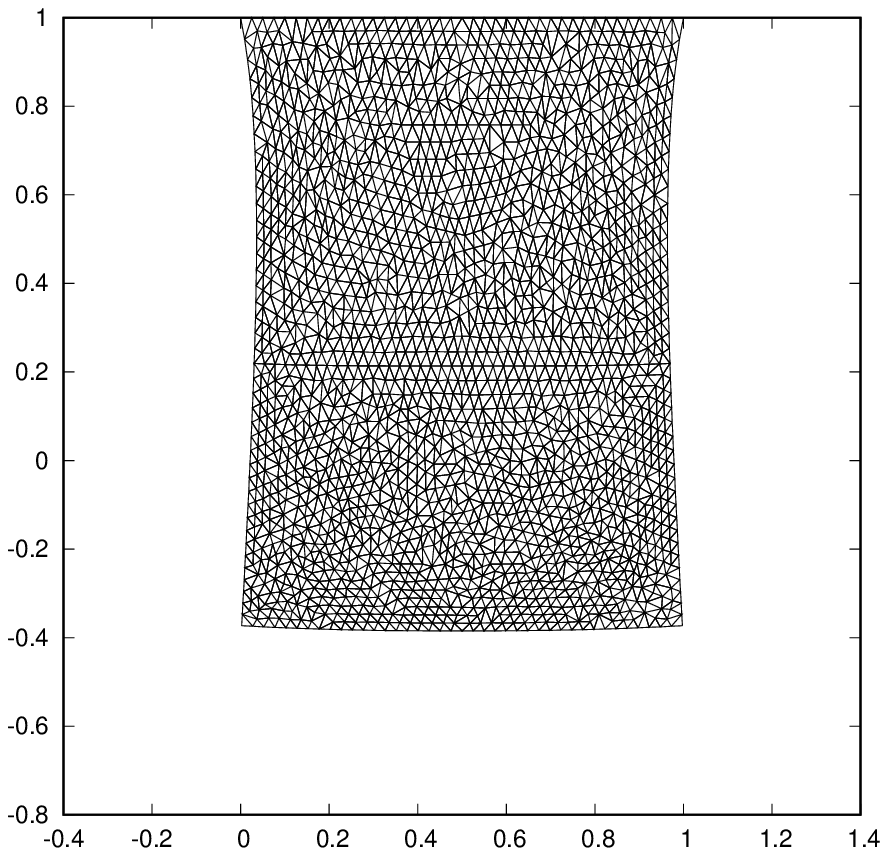}
(c3)~\includegraphics[bb=100 50 360 302,scale=.36]{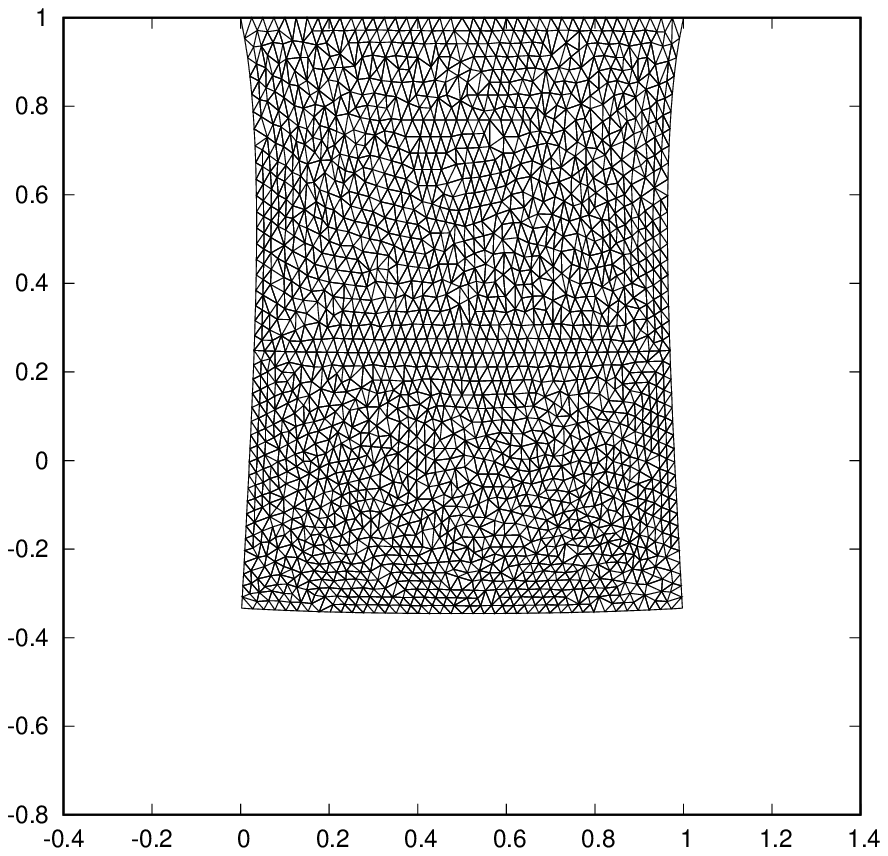}
\bigskip\\
(a4)~\includegraphics[bb=100 50 360 302,scale=.36]{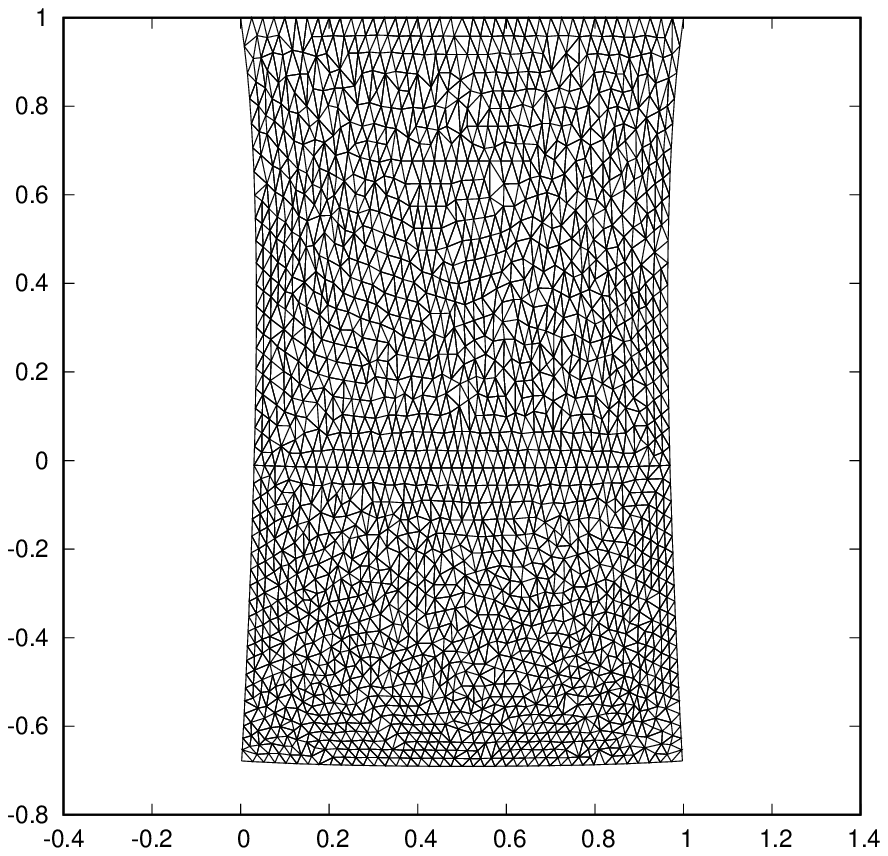}
(b4)~\includegraphics[bb=100 50 360 302,scale=.36]{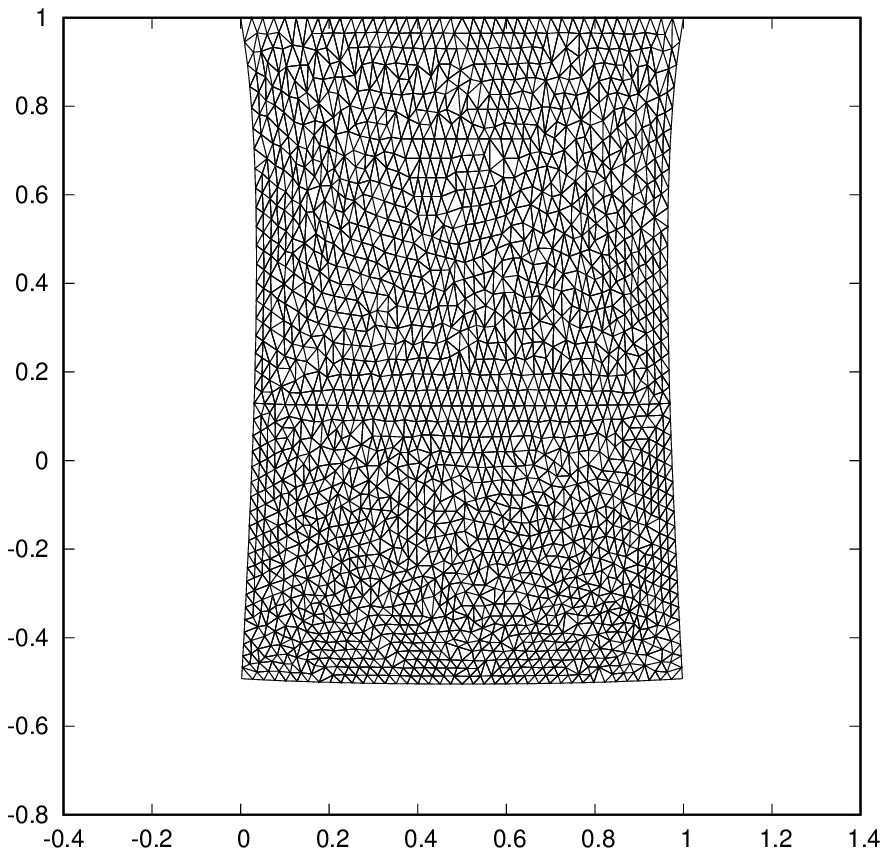}
(c4)~\includegraphics[bb=100 50 360 302,scale=.36]{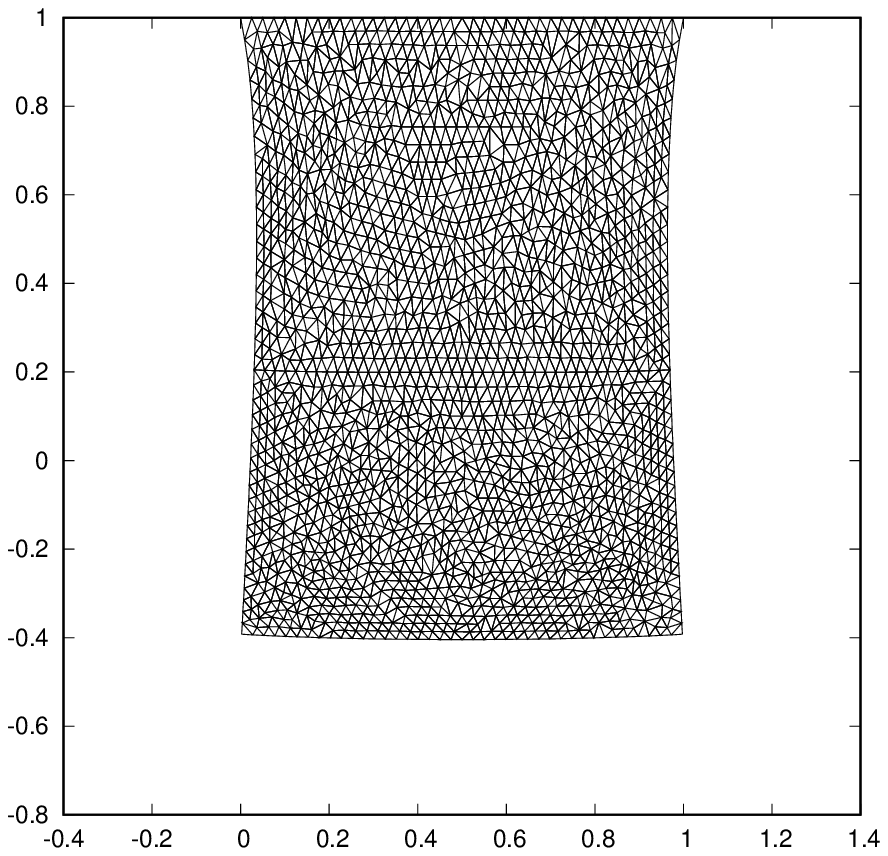}
\caption{Time evolution of the deformed shapes of the domain for $t=0.0$, $0.1$, $0.3$, $0.5$ and $1.0$ (top to bottom) for Example~\ref{ex:1}: left~(a): $\alpha=0$, center~(b): $\alpha=1$, right~(c): $\alpha=2$.}
\label{fig:time_evolution_ex1}
\end{figure}
\begin{figure}[!htbp]
\centering
\includegraphics[bb=50 50 410 302,scale=.5]{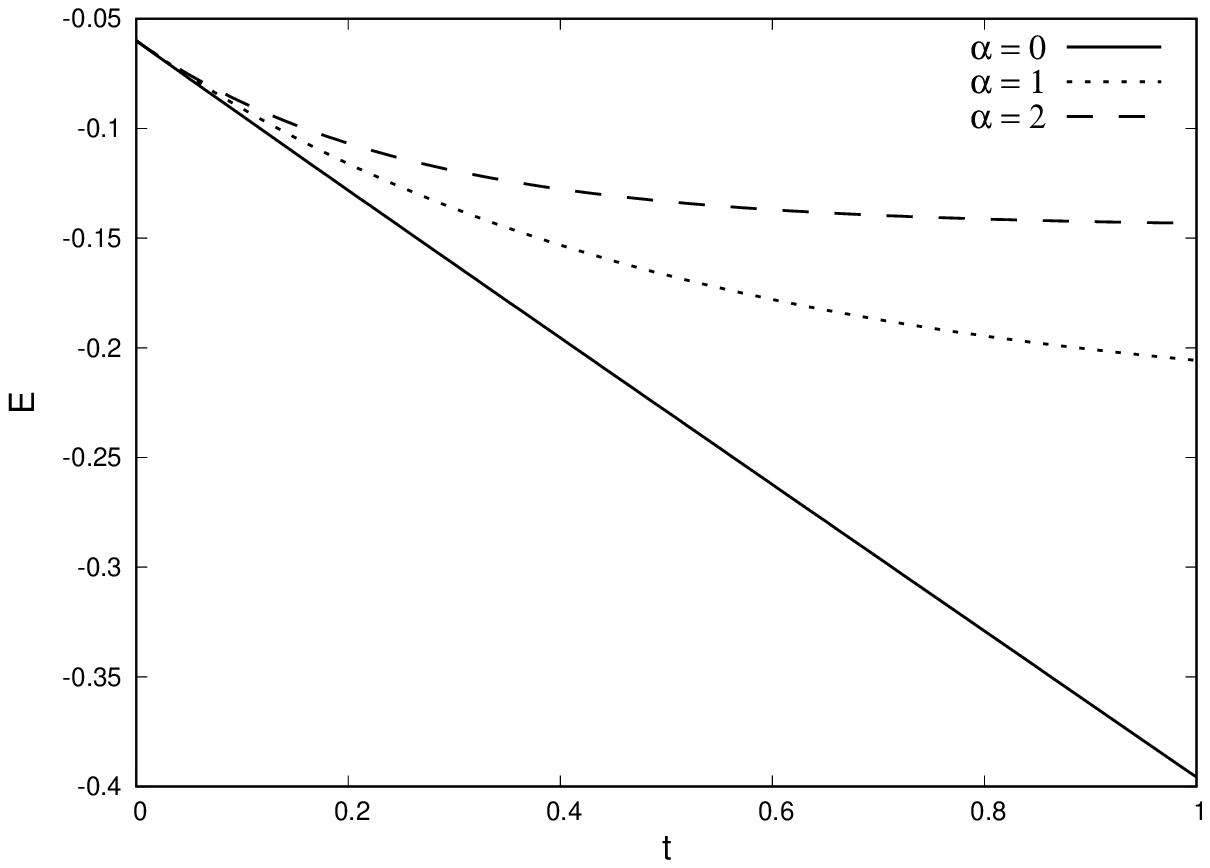}
\bigskip
\caption{The energy~$E_h^k~(k=0,\ldots, N_T)$ as a function of time for Example~\ref{ex:1}.}
\label{fig:energy_ex1}
\end{figure}
\begin{figure}[!htbp]
\centering
(a0)~\includegraphics[bb=100 50 360 302,scale=.36]{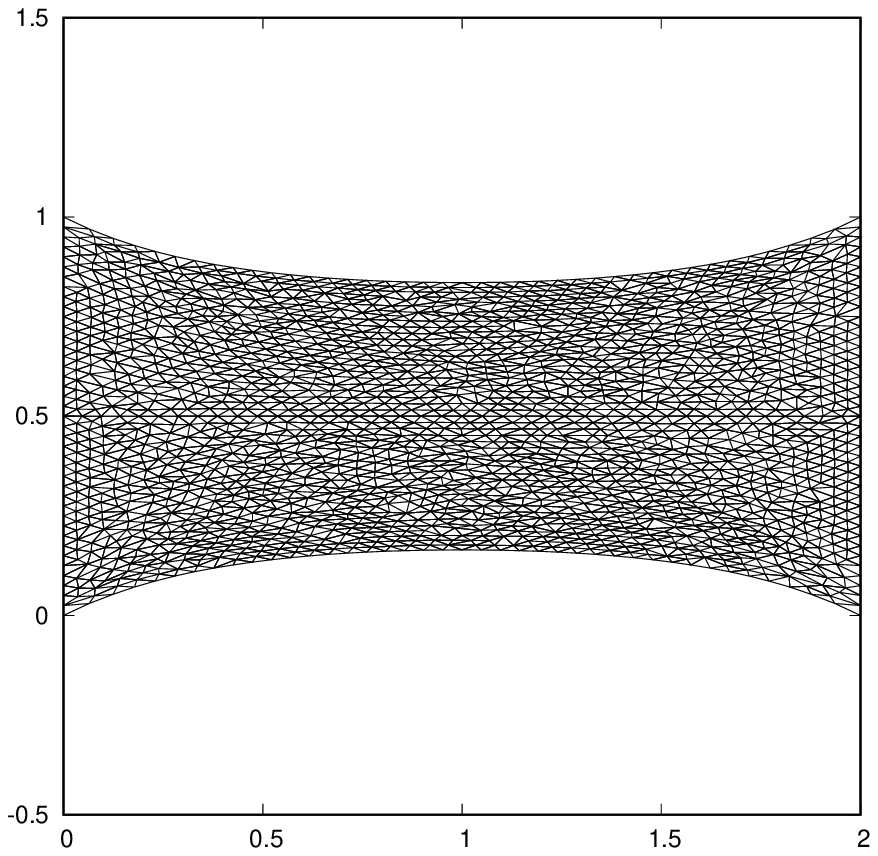}
(b0)~\includegraphics[bb=100 50 360 302,scale=.36]{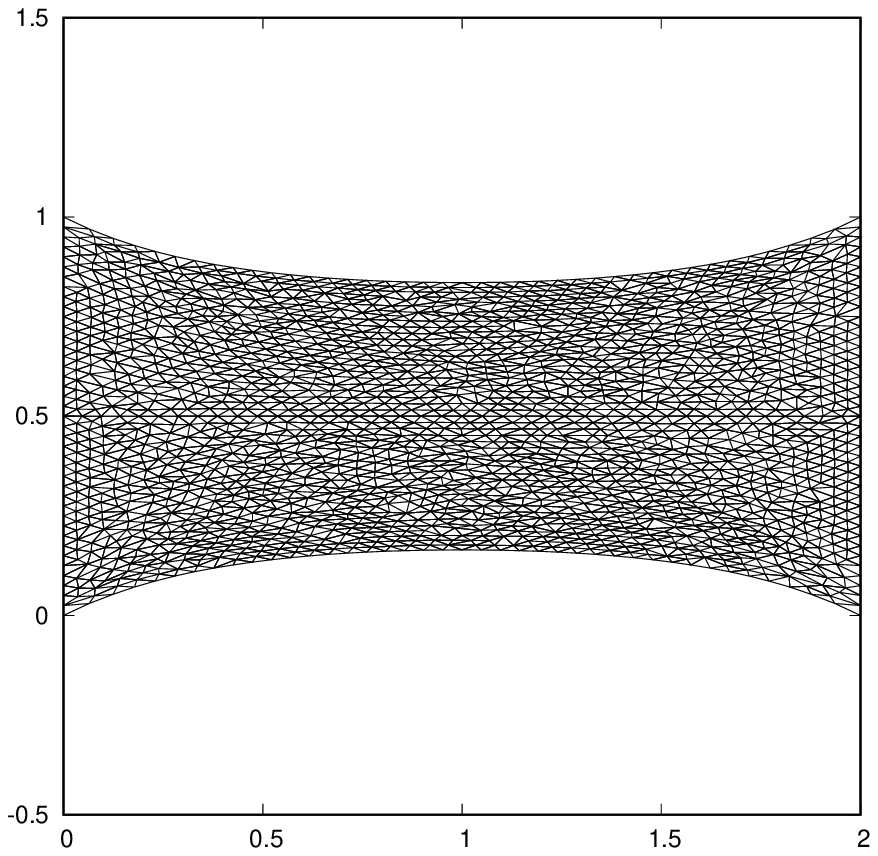}
(c0)~\includegraphics[bb=100 50 360 302,scale=.36]{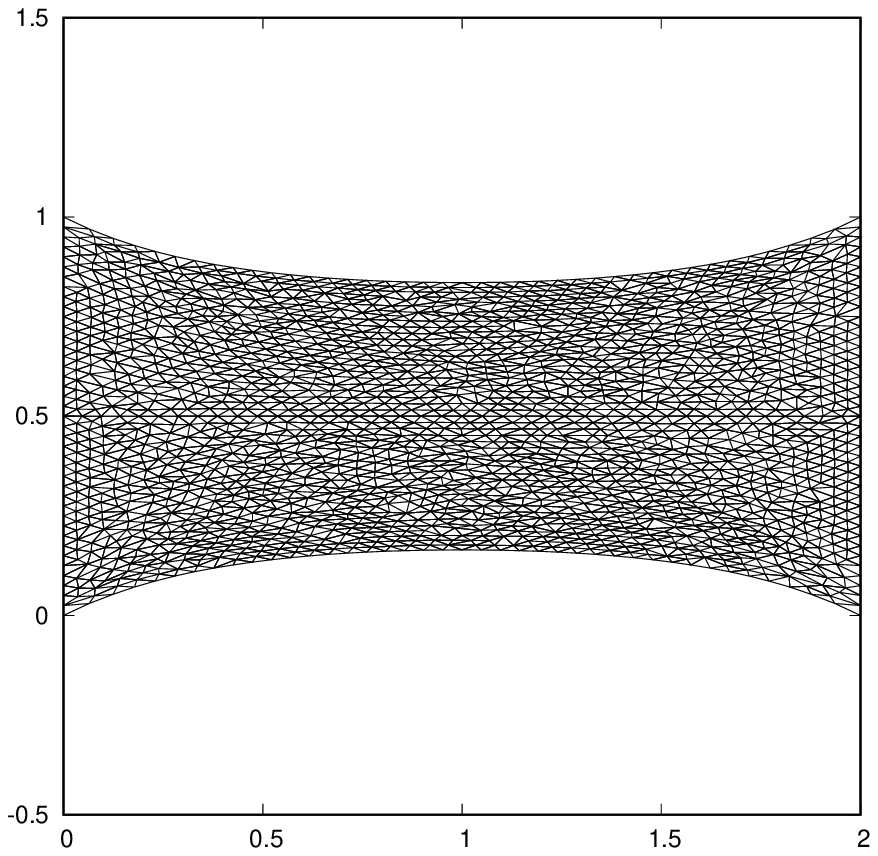}
\bigskip\\
(a1)~\includegraphics[bb=100 50 360 302,scale=.36]{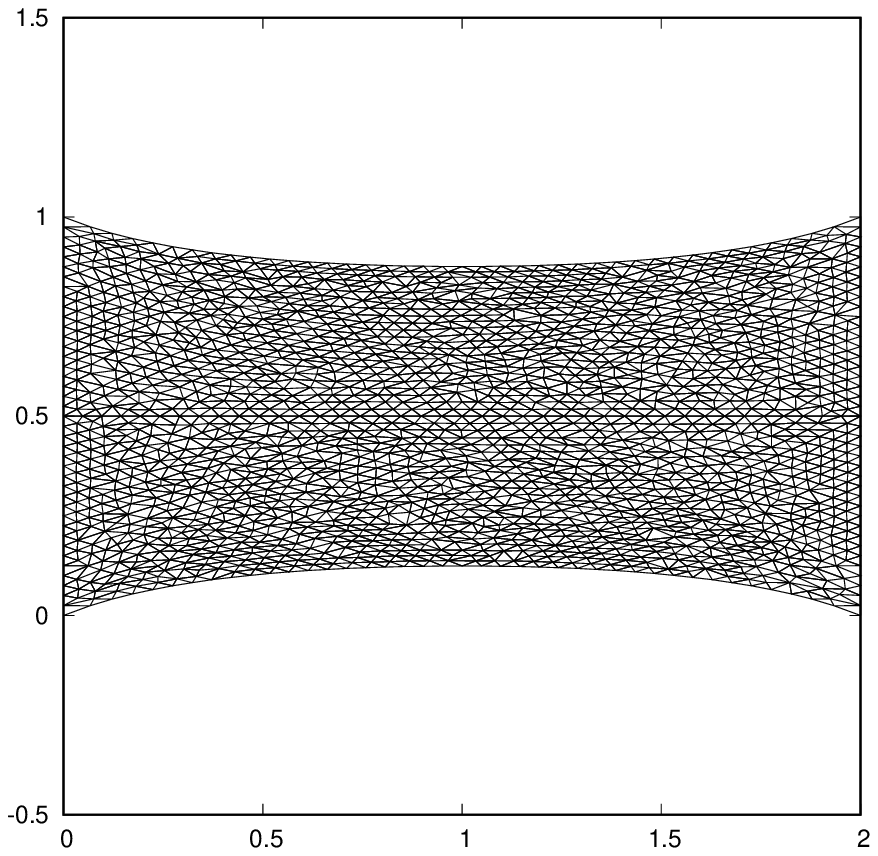}
(b1)~\includegraphics[bb=100 50 360 302,scale=.36]{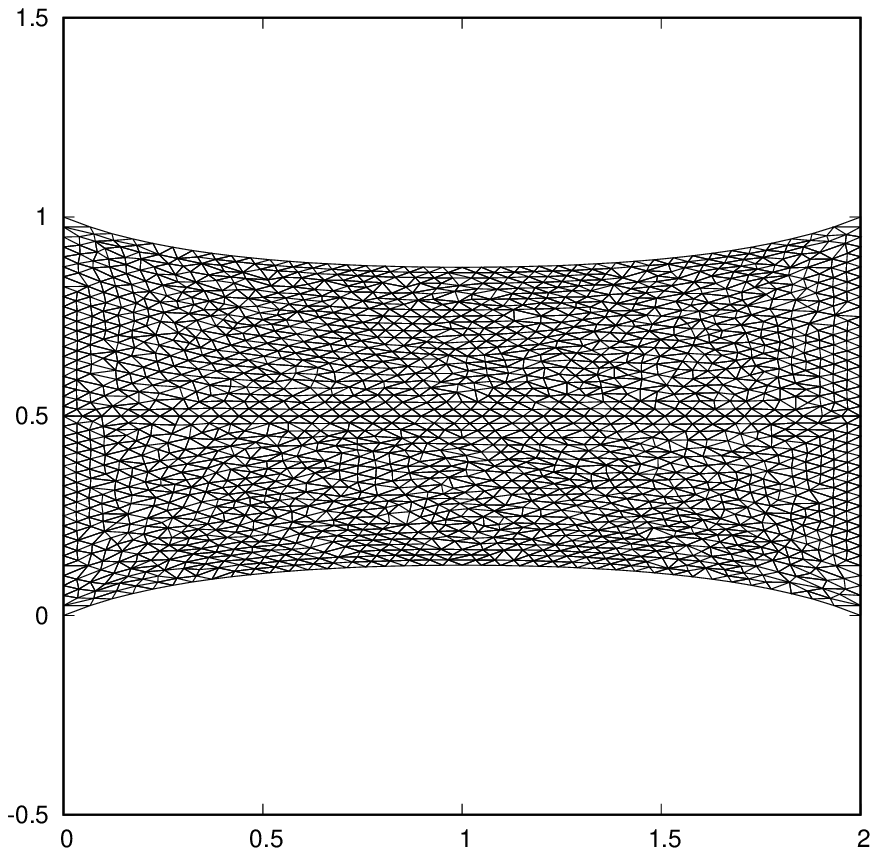}
(c1)~\includegraphics[bb=100 50 360 302,scale=.36]{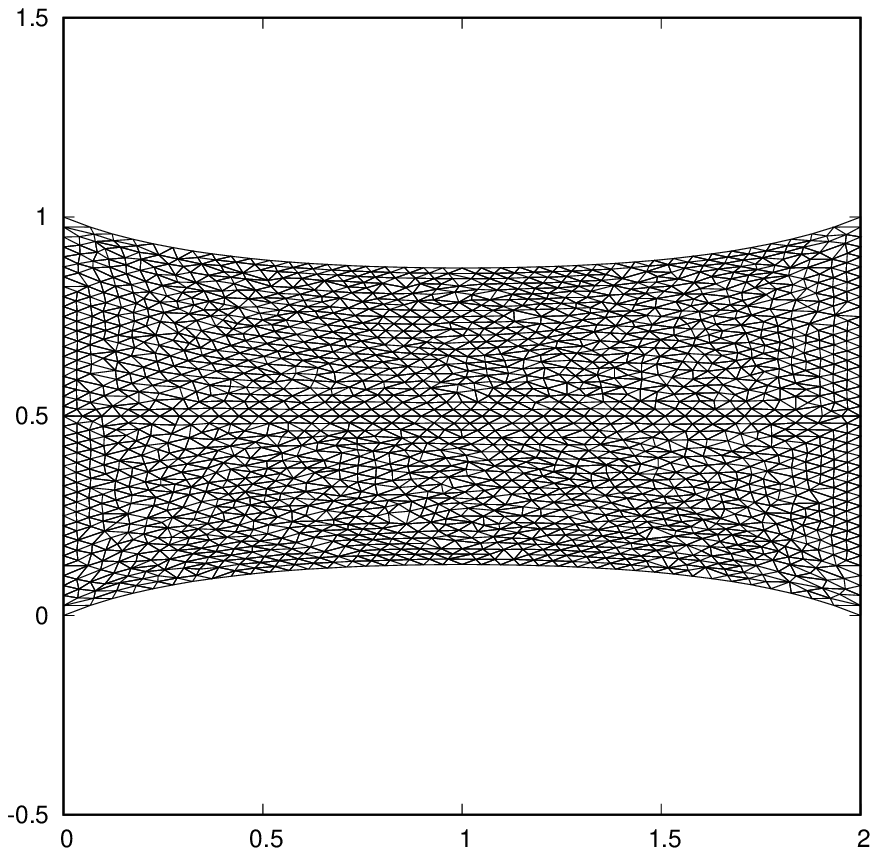}
\bigskip\\
(a2)~\includegraphics[bb=100 50 360 302,scale=.36]{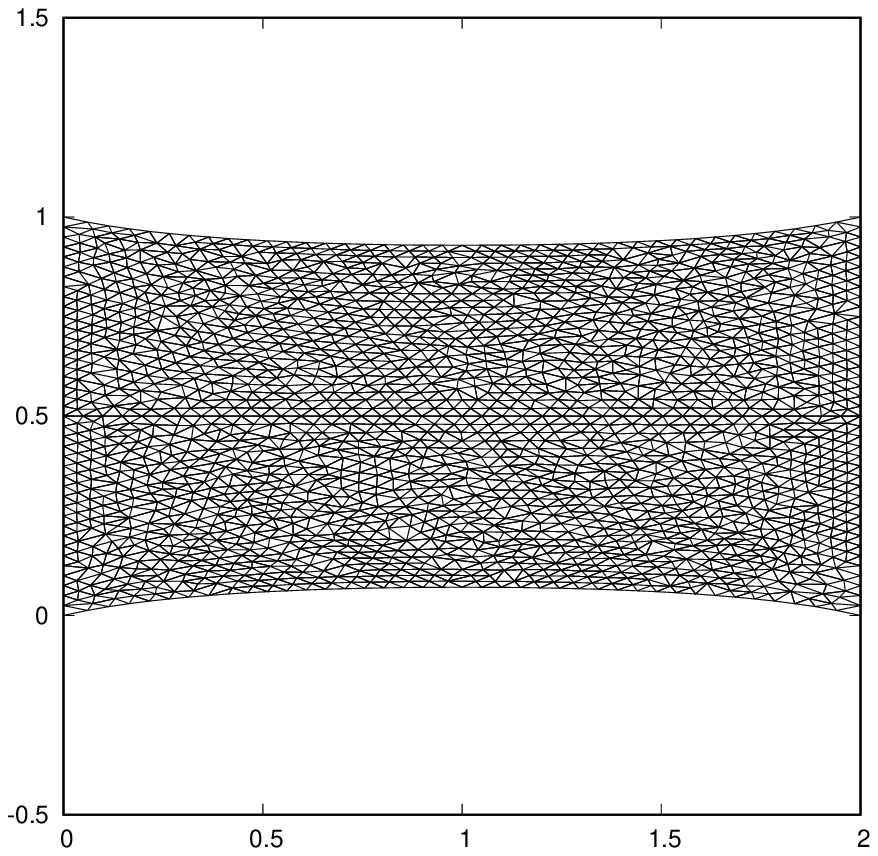}
(b2)~\includegraphics[bb=100 50 360 302,scale=.36]{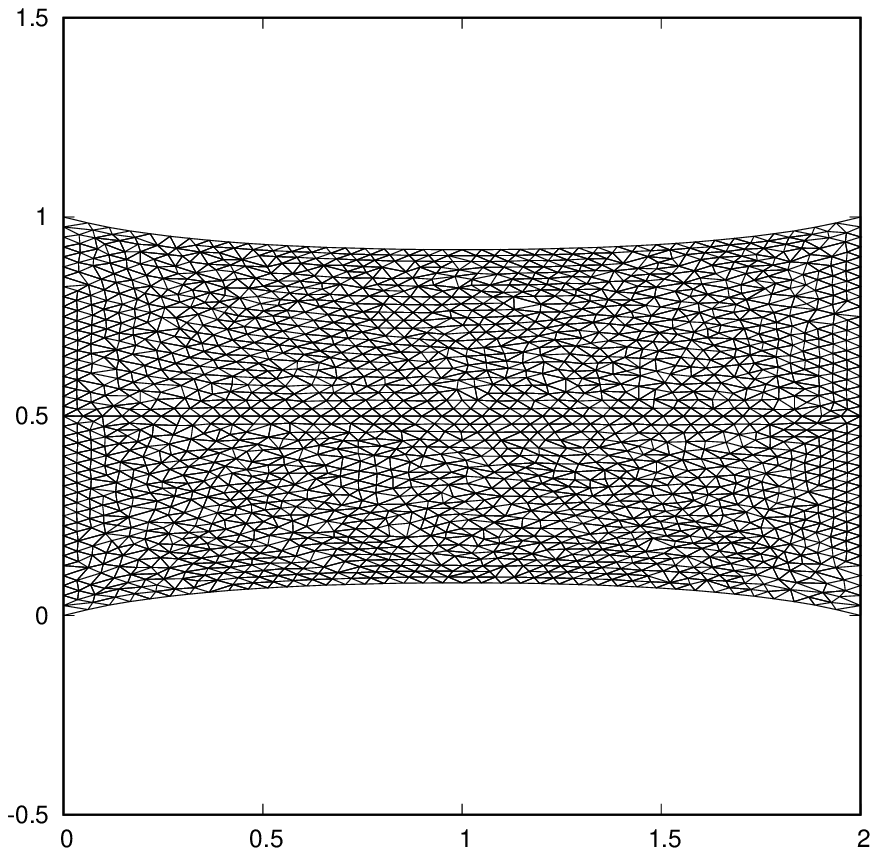}
(c2)~\includegraphics[bb=100 50 360 302,scale=.36]{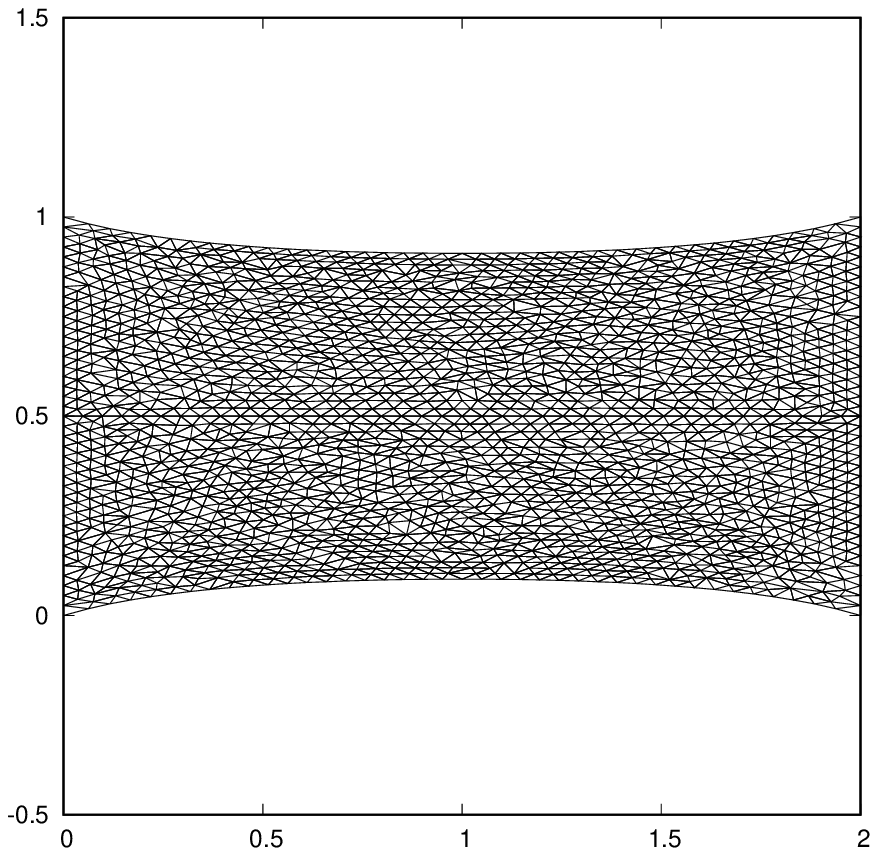}
\bigskip\\
(a3)~\includegraphics[bb=100 50 360 302,scale=.36]{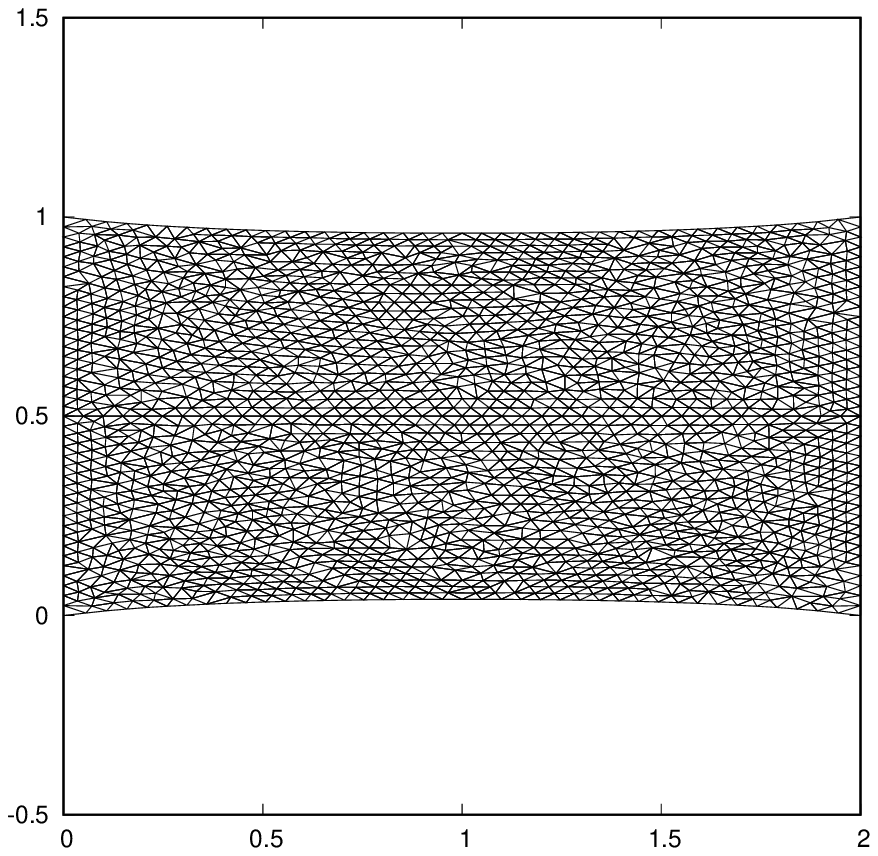}
(b3)~\includegraphics[bb=100 50 360 302,scale=.36]{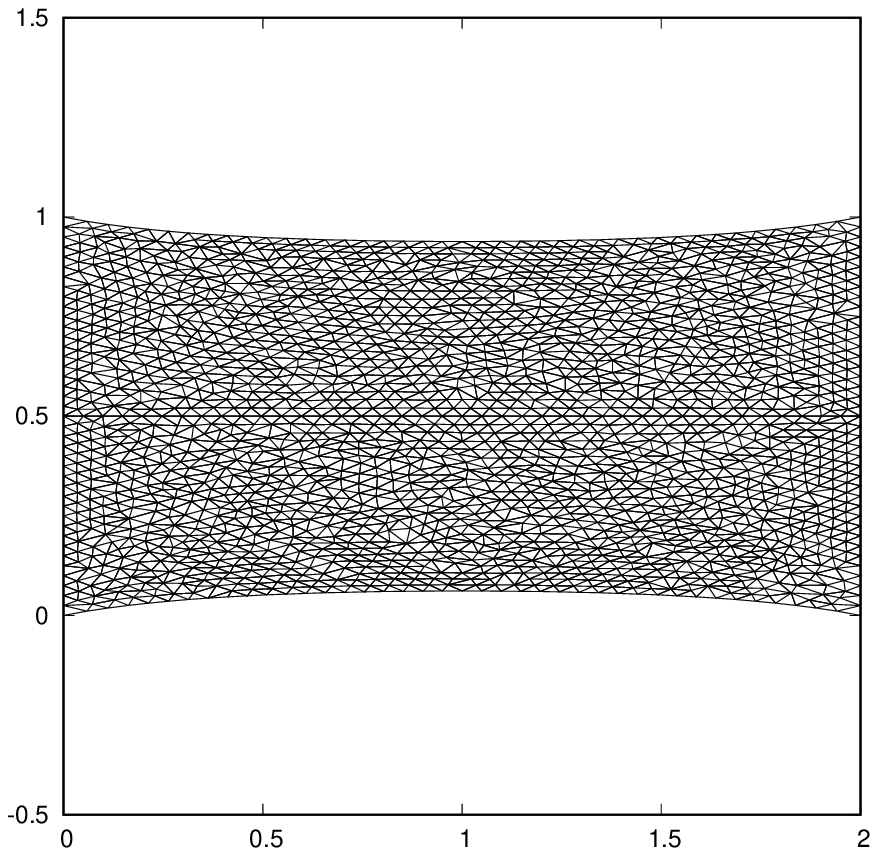}
(c3)~\includegraphics[bb=100 50 360 302,scale=.36]{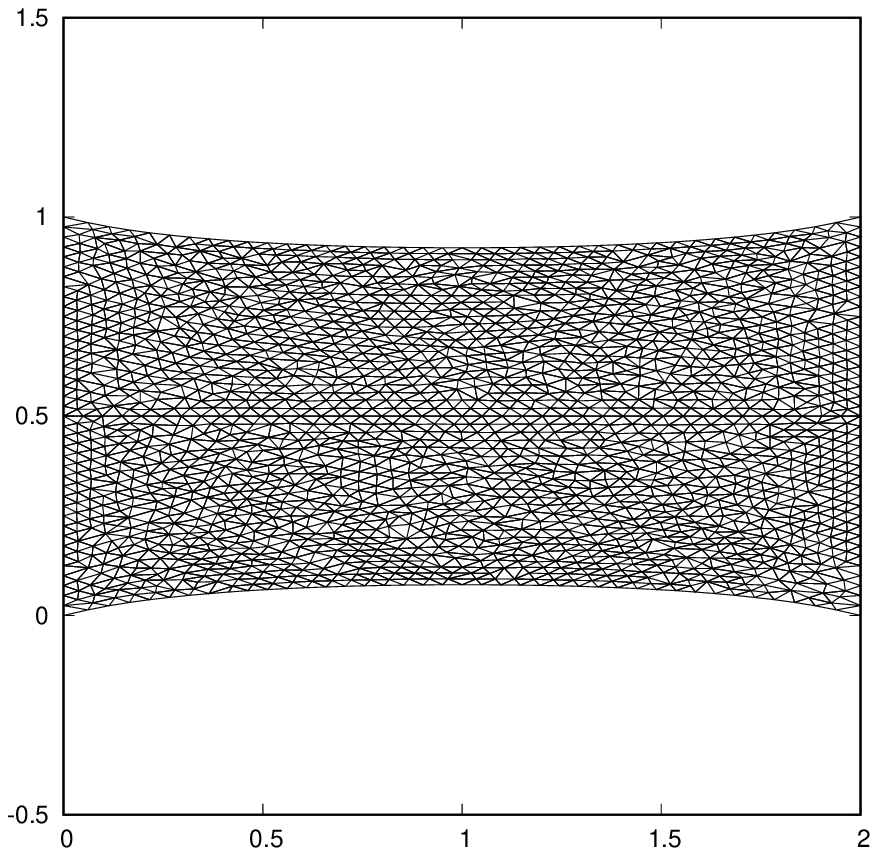}
\bigskip\\
(a4)~\includegraphics[bb=100 50 360 302,scale=.36]{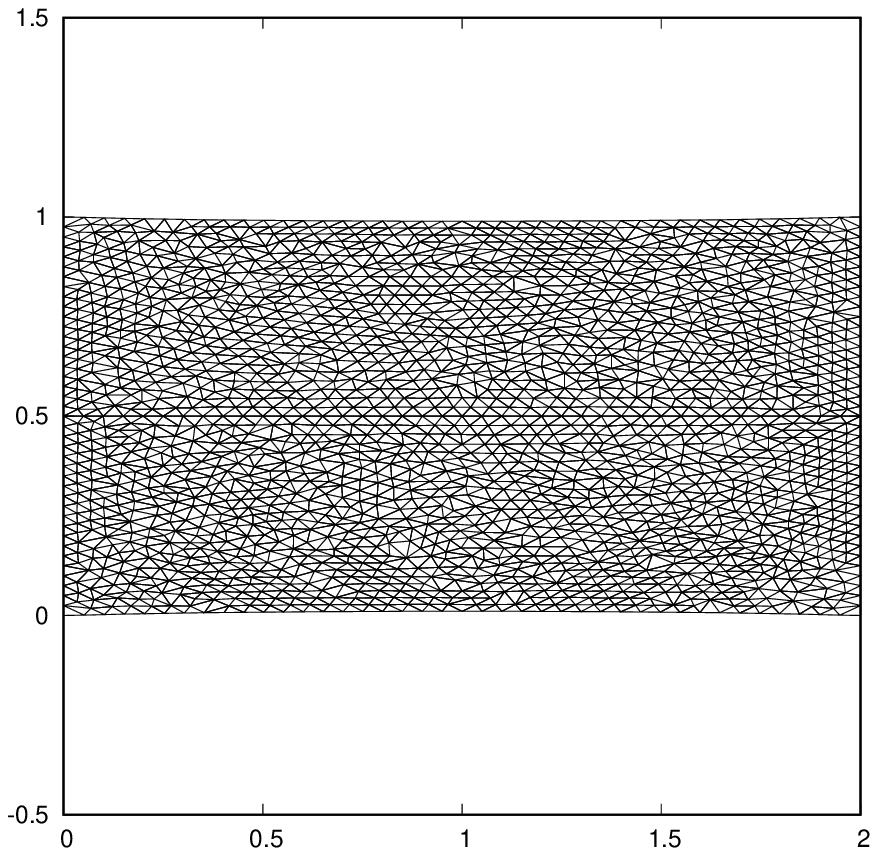}
(b4)~\includegraphics[bb=100 50 360 302,scale=.36]{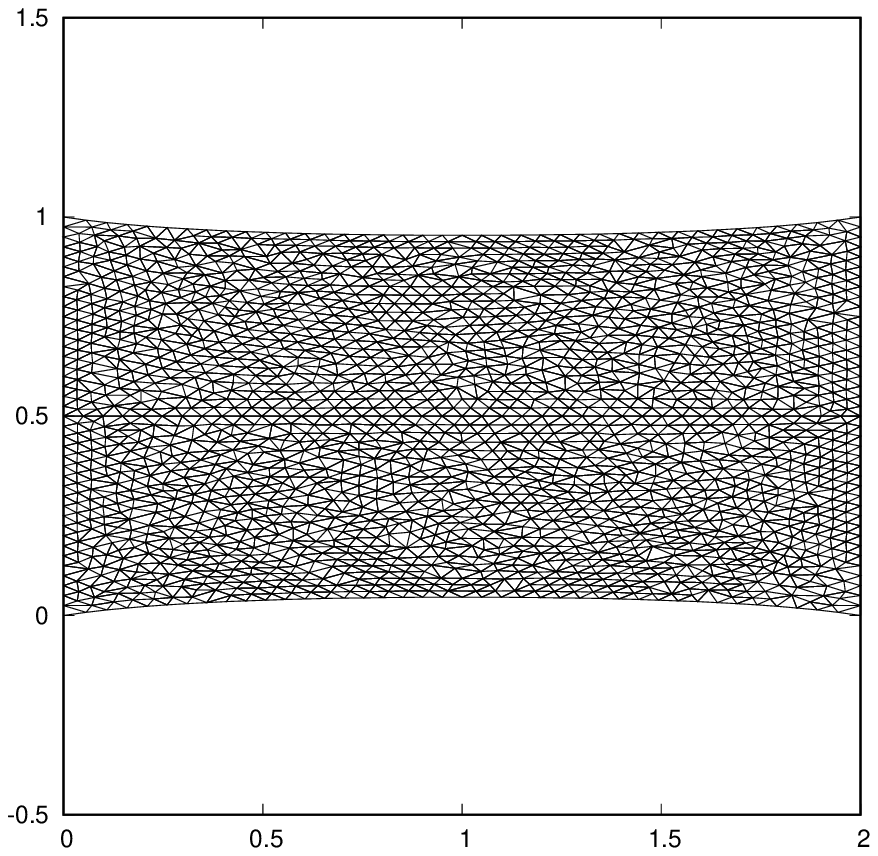}
(c4)~\includegraphics[bb=100 50 360 302,scale=.36]{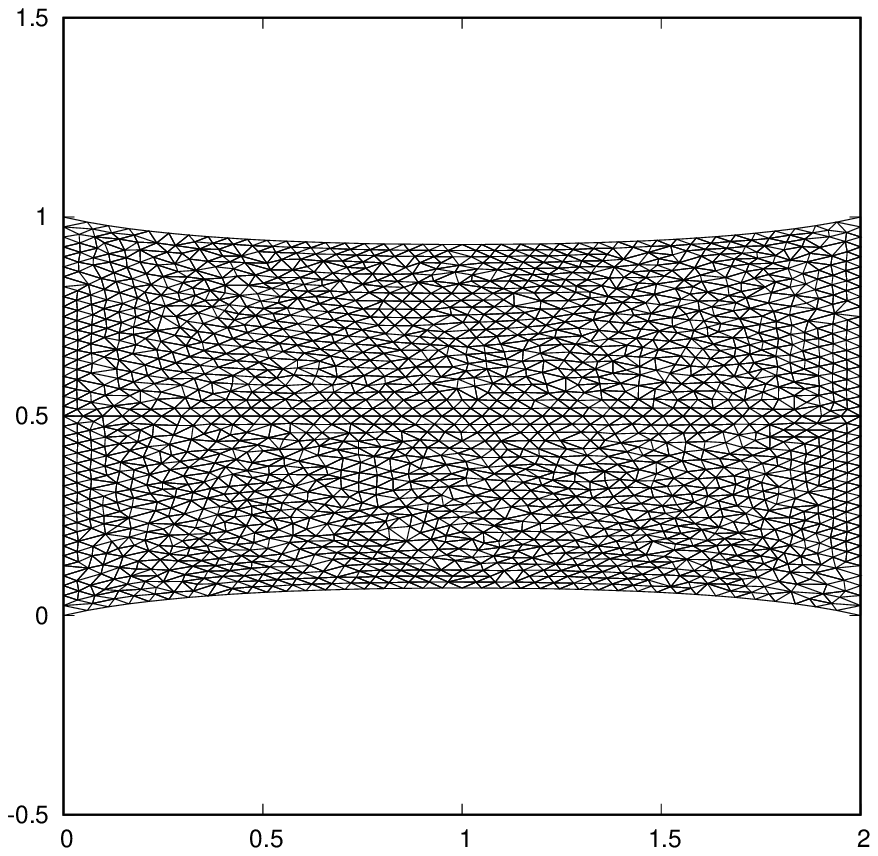}
\caption{Time evolution of the deformed shapes of the domain for $t=0.0$, $0.1$, $0.3$, $0.5$ and $1.0$ (top to bottom) for Example~\ref{ex:2}: left~(a): $\alpha=0$, center~(b): $\alpha=1$, right~(c): $\alpha=2$.}
\label{fig:time_evolution_ex2}
\end{figure}
\begin{figure}[!htbp]
\centering
\includegraphics[bb=50 50 410 302,scale=.5]{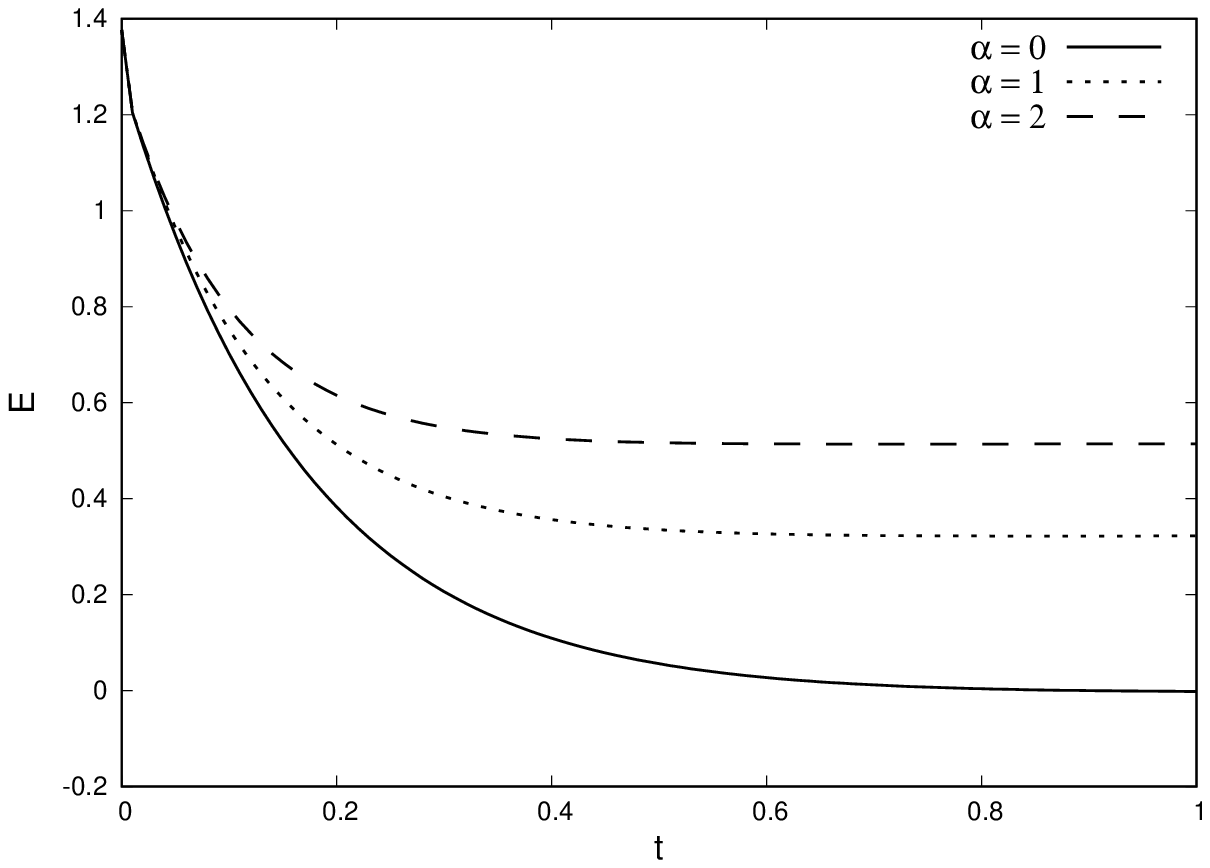}
\bigskip
\caption{The energy~$E_h^k~(k=0,\ldots, N_T)$ as a function of time for Example~\ref{ex:2}.}
\label{fig:energy_ex2}
\end{figure}
\begin{figure}[!htbp]
\centering
\includegraphics[bb=50 50 410 302,scale=.5]{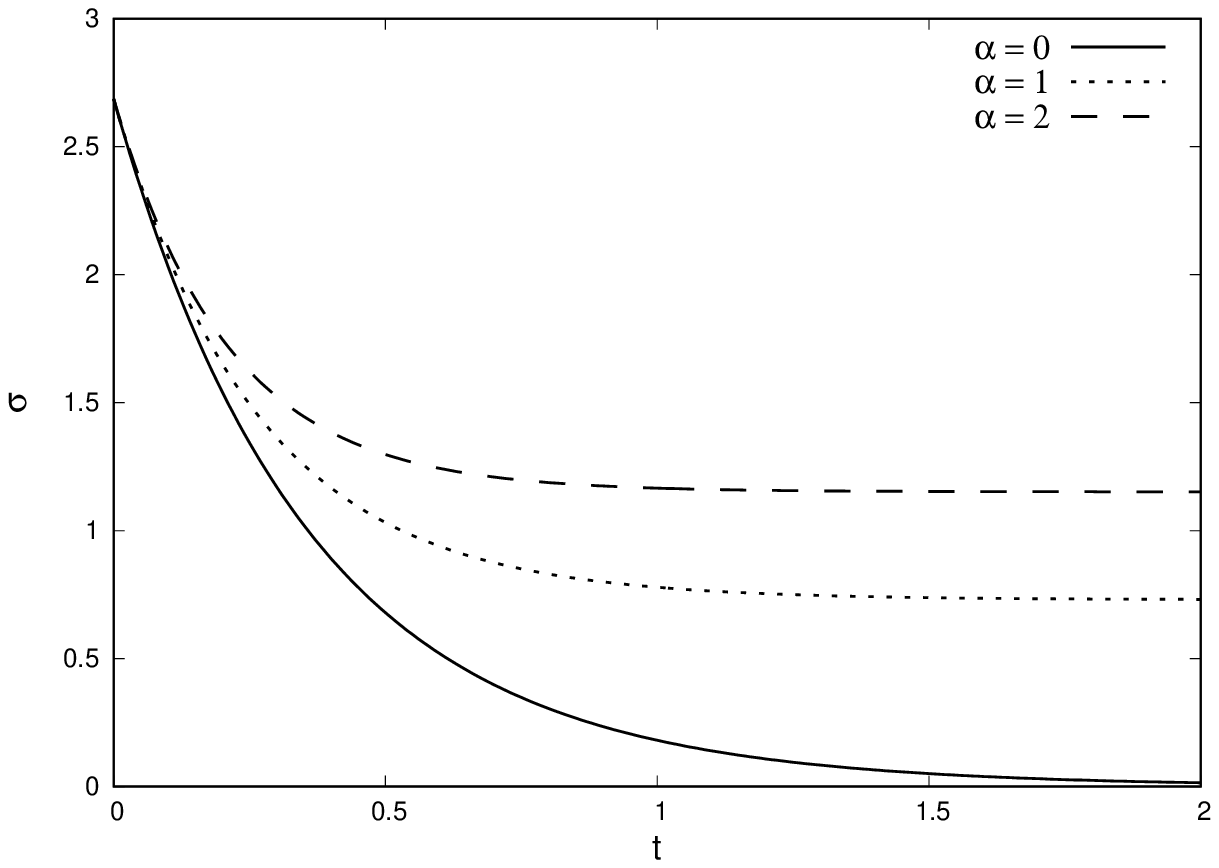}
\bigskip
\caption{$\|\sigma_{11} [u_h^k,\phi_h^k] \|_{L^\infty(\Omega)}~(k=0, \ldots, N_T)$ as a function of time for Example~\ref{ex:2}.}
\label{fig:stress_relax}
\end{figure}
%
%
%
%
%
\section{Conclusions}\label{sec:conclusions}
We have developed a gradient flow structure and established an energy decay property for the extended Maxwell viscoelastic model in Theorem~\ref{thm:gradient_flow_continuous}.
For a backward Euler time-discretization of the model, we have proved the existence and uniqueness of its solutions in Theorem~\ref{thm:solvability_tau} and established the time-discrete gradient flow structure of the corresponding energy in Theorem~\ref{thm:gradient_flow_tau}.
A P1/P0 finite element scheme preserving the structure has been presented, where the solvability and the stability in the sense of energy have been ensured in Theorems~\ref{thm:solvability_fem} and~\ref{thm:gradient_flow_fem}, respectively.
The backward Euler method has been employed for the time integration in the scheme.
The scheme is, however, realized by an efficient algorithm, cf.~Algorithm on p.\pageref{AL}, where for each time-step the function~$\phi_h^k$ is determined explicitly on each triangular element.
Two-dimensional numerical results have been shown to observe the typical viscoelastic phenomena, {\it creep} and {\it stress relaxation} and the effect of the relaxation parameter~$\alpha$.
\par
The existence and uniqueness of the Maxwell model~\eqref{prob:zener_strong} and the error estimates of the scheme will be presented in a forthcoming paper.
%
%
%
\section*{Acknowledgements}
%
This work is partially supported by JSPS KAKENHI Grant Numbers JP16H02155, JP17H02857, JP26800091, JP16K13779, JP18H01135, and JP17K05609, JSPS A3 Foresight Program, and JST PRESTO Grant Number~JPMJPR16EA.
%
%
%
%
\newcommand{\noop}[1]{}

\end{document}